\documentclass[12pt]{article}

\usepackage[margin=1in]{geometry}
\usepackage{amsmath,amssymb,amsfonts,amsthm,mathtools,mathrsfs,array,booktabs,tabularx}
\usepackage{latexsym,stmaryrd,graphicx,cancel,array,setspace}
\usepackage[dvipsnames,table]{xcolor}
\usepackage{algorithm}
\usepackage{algpseudocode}
\usepackage{tikz,tikz-cd}
\usetikzlibrary{arrows,knots,calc,positioning,arrows.meta}
\usepackage{ytableau}
\usepackage[shortlabels]{enumitem}
\usepackage{float}
\usepackage{longtable}
\usepackage{microtype}
\usepackage{placeins}
\usepackage{needspace}
\usepackage[colorlinks=true,linkcolor=blue,citecolor=blue,urlcolor=blue]{hyperref}
\usetikzlibrary{
    arrows.meta,
    decorations.pathmorphing,
    shapes.symbols
}

\newtheorem{thm}{Theorem}[section]
\newtheorem*{thm*}{Theorem}
\newtheorem{cor}[thm]{Corollary}
\newtheorem*{cor*}{Corollary}
\newtheorem{lem}[thm]{Lemma}
\newtheorem*{lem*}{Lemma}
\newtheorem{prop}[thm]{Proposition}
\newtheorem*{prop*}{Proposition}
\newtheorem{prob}{Problem}

\newtheorem{mainthm}{Theorem}

\newtheorem{maincor}[mainthm]{Corollary}

\theoremstyle{definition}
\newtheorem{defn}[thm]{Definition}
\newtheorem*{defn*}{Definition}

\newtheorem*{conjecture*}{Conjecture}

\newtheorem*{condition*}{Condition}
\newtheorem*{assumption*}{Assumption}

\theoremstyle{remark}
\newtheorem{rem}[thm]{Remark}
\newtheorem*{rem*}{Remark}
\newtheorem{example}[thm]{Example}

\newtheorem{problem}[thm]{Problem}
\newtheorem*{problem*}{Problem}

\newcommand{\Ht}{\widetilde{H}}

\newcommand{\Z}{\mathbb{Z}}

\newcommand{\C}{\mathbb{C}}
\newcommand{\xn}{\mathbf{x}_n}
\newcommand{\yn}{\mathbf{y}_n}
\newcommand{\xv}{\mathbf{x}}
\newcommand{\yv}{\mathbf{y}}

\newcommand{\mxy}[2]{\xv^{#1}\yv^{#2}}

\newcommand{\symm}{\mathfrak{S}}

\newcommand{\CST}{\mathrm{CST}}
\newcommand{\SYT}{\mathrm{SYT}}

\newcommand{\RowLat}{\mathrm{Lat}}

\newcommand{\Basis}{\mathcal{B}}

\DeclareMathOperator{\maj}{maj}
\DeclareMathOperator{\inv}{inv}
\DeclareMathOperator{\Inv}{Inv}
\DeclareMathOperator{\Des}{Des}
\DeclareMathOperator{\rowht}{ht}

\DeclareMathOperator{\tp}{top}
\DeclareMathOperator{\Hilb}{Hilb}
\DeclareMathOperator{\Frob}{Frob}

\newcommand{\invvec}{\operatorname{invvec}}
\newcommand{\rowvec}{\operatorname{rowvec}}

\newcommand{\rowsort}{\operatorname{rowsort}}
\newcommand{\Lat}{\mathrm{Lat}}

\newcommand{\blankcell}{\phantom{\bullet}}

\renewcommand{\arraystretch}{1.2}

\title{Bases for the Top-Degree Garsia--Haiman Module}
\author{Raymond Chou}

\AtBeginDocument{%
  \setlength{\abovedisplayskip}{10pt plus 3pt minus 6pt}%
  \setlength{\belowdisplayskip}{10pt plus 3pt minus 6pt}%
  \setlength{\abovedisplayshortskip}{0pt plus 3pt}%
  \setlength{\belowdisplayshortskip}{5pt plus 3pt minus 3pt}%
}

\begin{document}

\maketitle

\begin{abstract}
    We introduce two new monomial bases for the top $t$-degree component of the Garsia--Haiman module $V_\mu$ (first introduced in \cite{garsia1993graded}) for every partition $\mu$. Both bases are indexed by column-strict standard fillings of shape $\mu$; the first has exponents governed by entrywise refinements of Haglund's combinatorial statistics for the modified Macdonald polynomials $\Ht_\mu(X;q,t)$ \cite{haglund2005combinatorial}. The second basis has exponents governed by an insertion algorithm of Bhattacharya--Ratheesh--Viswanath \cite{bhattacharya2024whittaker}, and gives a basis realization of the ``fermionic formula" of the $m_{1^n}$-coefficient of the $q$-Whittaker polynomial. Our proof uses orbit harmonics, and is inspired by the original approach of Garsia and Haiman \cite{garsia1996natural}.
\end{abstract}

\section{Introduction}

Let $\xn := \{x_1,\dots,x_n\},\yn := \{y_1,\dots,y_n\}$, and write $R := \C[\xn,\yn]$ for the polynomial ring in $2n$ variables. We have a natural duality pairing
$$ \langle f, g \rangle := \text{ constant term of }f(\partial x_1,\dots,\partial x_n ; \partial y_1,\dots,\partial y_n) \cdot g(x_1,\dots,x_n;y_1,\dots,y_n)$$
where $\cdot$ denotes acting by a linear extension of partial derivative operators. Given a partition $\mu \vdash n$, Garsia and Haiman \cite{garsia1993graded} define a generalized Vandermonde determinant $\Delta_\mu$; its \emph{annihilating ideal} is defined to be
$$J_\mu := \{ f \in \C[\xn,\yn] : f \cdot \Delta_\mu = 0 \}$$
and we define the \emph{Garsia--Haiman module} to be $V_\mu := \C[\xn,\yn]/J_\mu$. Garsia and Haiman \cite{garsia1993graded} originally constructed the doubly-graded $S_n$-module $V_\mu$ to act as a representation-theoretic model for the \emph{modified Macdonald polynomial} $\Ht_\mu(X;q,t)$, to be defined below. More precisely, $\Frob_{q,t}(V_\mu) = \Ht_\mu(X;q,t)$. This was implied by Haiman's \emph{$n!$ theorem} \cite[Theorem~3.2]{haiman2001hilbert}, which states that for any partition $\mu \vdash n$, $\dim_\C(V_\mu) = n!$, which in turn established the Schur-positivity of the $\Ht_\mu(X;q,t)$, a very important open problem at the time.

Haiman's proof consisted of a deep analysis of the \emph{Hilbert Scheme of points in $\C^2$}, denoted $\Hilb_n(\C^2)$. The rings $V_\mu$ were interpreted as coordinate rings of the scheme-theoretic fibers in the \emph{isospectral Hilbert Scheme} $X_n \to\Hilb_n(\C^2)$ over the torus-fixed points indexed by $\mu$ \cite[Proposition~3.7.3]{haiman2001hilbert}, and the result was obtained by showing $X_n$ was Gorenstein, hence Cohen--Macaulay, and that the natural projection $X_n \to \Hilb_n(\C^2)$ is finite and flat of degree $n!$ \cite[Theorem~3.1 and Proposition~3.7.4]{haiman2001hilbert}. Haiman's proof, however, did not provide an explicit basis, which leaves the following open problem:

\begin{prob}\label{problem1}
    Given a partition $\mu \vdash n$, construct an explicit basis for the Garsia--Haiman module $V_\mu$, preferably in terms of combinatorial data associated to $\mu$.
\end{prob}

In 2004, Jim Haglund ``found water on Mars" (as Garsia put it \cite{haglund2006genesis}) by discovering a combinatorial formula for the modified Macdonald polynomials \cite{haglund2004combinatorial}, proven the following year with Haiman and Loehr \cite{haglund2005combinatorial}. This was a leap forward, as it now gave a natural combinatorial construction that potential basis candidates could be verified against. This led to partial progress for the basis problem: for $\mu = (n-k+1,1^{k-1})$ a hook shape, Adin--Remmel--Roichman have established a family of bases \cite{adin2014combinatorics}, including the $k$-th Haglund basis, directly based on Haglund's statistics. Separately, the two-column case $\mu_1 \leq 2$ has a ``kicking basis" due to Assaf--Garsia \cite{assaf2009kicking}, based off the orbit harmonics construction, recalled in detail below.

The purpose of this paper is to give a partial solution to Problem \ref{problem1}. Writing $n(\mu) = \sum_i (i-1)\mu_i$, we may decompose $V_\mu$ into its graded components:

\begin{equation}\label{eq:ghgrading}
    V_\mu \cong \bigoplus_{j = 0}^{n(\mu)} \bigoplus_{i = 0}^{n(\mu')} V_\mu^{(i,j)}
\end{equation}

where $i$ and $j$ track the degrees of the $x$- and $y$-variables, respectively. Let us now restrict our attention to when $j = n(\mu)$. Define

$$V_\mu^{\tp} := \bigoplus_{i=0}^{n(\mu')} V_\mu^{(i,n(\mu))}$$

so that the $t$-degree is maximal. Our theorem describes bases for $V_\mu^{\tp}$.

The first basis, first conjectured in the author's thesis, is indexed by column-strict standard fillings of shape \(\mu\), with exponents governed by the top-degree specialization of the Haglund--Haiman--Loehr statistics; they are recalled in detail in Section~\ref{sec:background}. Write \(\CST_\mu\) for this set of fillings and, for \(\sigma\in\CST_\mu\), write
\[
    \mathfrak m_\sigma
    :=\mxy{\invvec(\sigma)}{\rowvec(\sigma)}
\]
Here \(\rowvec(\sigma)\) records the row heights of the entries, while \(\invvec(\sigma)\) is a tuple of nonnegative integers tracking the middle entry of Haglund's inversion triples; we give a detailed definition in Section \ref{sec:background}. 

\begin{example}
    Let $\mu = (3,2)$, and consider the filling $\sigma \in \CST_{(3,2)}$ given by
    $$
    \begin{ytableau}
        5 & 4 \\
        1 & 3 & 2
    \end{ytableau}
    $$
    The inversion triples are $(-\infty,2,3)$ and $(1,4,5)$, so that
    \begin{gather*}
        \invvec(\sigma) = (0,1,0,1,0), \qquad
        \rowvec(\sigma) = (0,0,0,1,1) \\
        \text{and}\qquad \mathfrak{m}_\sigma = x_2x_4y_4y_5
    \end{gather*}
\end{example}

\begin{mainthm}[Haglund basis; Conjecture~5.4.5 of~\cite{chou2023thesis}]\label{thma:intro}
    The monomials
    \[
        \Basis_\mu^{\mathrm{top}} := \{\mathfrak m_\sigma:\sigma\in\CST_\mu\}
    \]
    descend to a vector-space basis for the top \(t\)-degree component \(V_\mu^{\tp}\) of the Garsia--Haiman module.
\end{mainthm}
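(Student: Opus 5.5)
We prove Theorem~\ref{thma:intro} in two halves, a dimension count and a spanning (or independence) argument, using orbit harmonics in the spirit of Garsia--Haiman. First, the count. By Haiman's $n!$ theorem, $\Frob_{q,t}(V_\mu)=\Ht_\mu(X;q,t)$. Taking the coefficient of $t^{n(\mu)}$ in Haglund's formula, only fillings with no $t$-descents contribute. In the top-degree specialization these are exactly the column-strict fillings, since $\maj$ attains $n(\mu)$ precisely when every column decreases upward. Extracting the $m_{1^n}$ (Hilbert series) coefficient then gives
\[
\dim V_\mu^{\tp}=|\CST_\mu| \quad\text{and}\quad \Hilb_q(V_\mu^{\tp})=\sum_{\sigma\in\CST_\mu}q^{\inv(\sigma)}.
\]
So it suffices to show that $\Basis_\mu^{\mathrm{top}}$ is linearly independent modulo $J_\mu$, or equivalently that it spans.

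To get a concrete handle on $V_\mu^{\tp}$, pass to the orbit-harmonics model. Take the $S_n$-orbit $X_\mu\subset\C^{2n}$ of a generic point whose $y$-coordinates are the row indices of the cells of $\mu$ and whose $x$-coordinates are generic distinct values. Then $\operatorname{gr} I(X_\mu)\subseteq J_\mu$, and the quotient $\C[\xn,\yn]/\operatorname{gr}I(X_\mu)$ has dimension $n!$. By the $n!$ theorem the two quotients agree, as in Garsia--Haiman \cite{garsia1996natural}. We then restrict to top $y$-degree. Multiplication by $\yv^{\rowvec}$-type monomials selects the points whose $y$-coordinates realize a fixed row assignment. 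This reduces the top-degree question to understanding $x$-polynomials on the set of points with a prescribed row word, which is a product of row-stabilizer orbits of generic points. We would make this precise as follows: for each row-height vector $r$, the span of $\{\mathfrak m_\sigma:\rowvec(\sigma)=r\}$ should map isomorphically onto a space of $x$-polynomials modulo the ideal generated by the top-degree parts of the ideals of these orbits.

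Next, the independence step. Order monomials by a term order that compares $\yv$-exponents first and then $\xv$-exponents in revlex. We aim to show that the $\mathfrak m_\sigma$ are standard monomials with respect to $\operatorname{gr}I(X_\mu)$ in top $y$-degree. Concretely, we construct, for each $\sigma$, a test polynomial: a product of Vandermonde-like factors $\prod(x_i-x_j)$ over the inversion triples of $\sigma$, times indicator polynomials in $y$ selecting the row assignment of $\sigma$. We then check that the pairing matrix $\langle \mathfrak m_\sigma, \partial\text{-image of }\Delta_\mu\text{ under test}_\tau\rangle$ is triangular with nonzero diagonal with respect to a suitable order on $\CST_\mu$. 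Equivalently, it is enough to show that $\mathfrak m_\sigma\cdot\Delta_\mu$ has a leading term that identifies $\sigma$. Within a fixed row vector, the entries of $\invvec(\sigma)$ count, for each $i$, the cells in the row below which would form an inversion triple with $i$. This is an Artin-basis/subword count: the row of $i$ is inserted into the sorted column structure. So the $x$-part should reduce to a product of Artin-type bases $\prod x_i^{a_i}$ with $a_i$ at most the number of available slots. This is a known basis for quotients by elementary symmetric functions in restricted variable sets, which is why I would assume the relevant ideal is generated by partial elementary symmetric functions (Tanisaki/Garsia--Procesi style).

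The main obstacle is this last identification. One has to show that the top-degree ideal for a fixed row assignment is generated by explicit elementary symmetric polynomials in the variables of consecutive rows, with degree bounds matching the inversion-triple counts, so that the $\invvec$ exponents are exactly the Artin-type standard monomials. I would first try to derive these generators directly from the orbit, using $\prod_{c}(y_i-c)$ relations combined with Lagrange interpolation in $x$. Then I would verify containment in $J_\mu$ and finish by the dimension count above.
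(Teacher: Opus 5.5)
Your dimension count is fine and matches the paper's. The gap is in the independence step, which is the whole content of the theorem once the count is in hand. It is only sketched, and the mechanism you propose fails at several points.

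First, the orbit is wrong. If the $x$-coordinates are generic and pairwise distinct rather than constant along columns, then $\operatorname{gr}I(X_\mu)\not\subseteq J_\mu$. For $\mu=(1,1)$ the orbit is $\{((\alpha,0),(\alpha',1)),((\alpha',1),(\alpha,0))\}$, and $x_1-\alpha-(\alpha'-\alpha)y_1$ vanishes on it. Its top form $x_1-(\alpha'-\alpha)y_1$ sends $\Delta_{(1,1)}=\pm(y_2-y_1)$ to the nonzero constant $\pm(\alpha'-\alpha)$. The Garsia--Haiman identification (Theorem~\ref{thm:bigr}) needs the $x$-coordinate of a cell to depend only on its column, as for the lattice points of $D_\mu$.

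Second, multiplying by $\yv^\beta$ does not select the points with row word $\beta$: such a monomial is nonzero on many row assignments. Even if it did, the orbit points with row word $\beta$ number $\prod_j\mu_j!$. By contrast, the subquotient $\operatorname{gr}^{\mathrm{lex}}_\beta V_\mu^{\tp}$ has dimension $|\CST_\mu(\beta)|$; for $\mu=(2,2)$, $\beta=0101$ these are $4$ versus $2$. Column-strictness has to come out of the top-$\yv$-degree condition, and your reduction never sees it.

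Third, the identification you single out as the main obstacle is false for these exponents. Independent bounds $0\le a_i<(\text{number of available slots})$ describe the flag basis of Theorem~\ref{thm:gap-monomial-basis}. That works because flag insertion (top row first, decreasing labels) always has exactly $g_a(\beta)$ eligible cells. Haglund insertion (bottom row first, increasing labels) lacks this property: the number of eligible cells for a label depends on where earlier labels were placed. So for a fixed row word the $\invvec$ exponents are not a product of intervals. In Example~\ref{ex:row-filtration-32} ($\mu=(3,2)$, $\beta=00101$) the $\xv$-parts are $\{x_1^ax_2^b:a\le2,\ b\le1\}\sqcup\{x_1^ax_3:a\le1\}$. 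This set contains $x_2$ and $x_3$ but not $x_2x_3$, so it cannot be the Artin-type standard monomial set you describe. Also, $\invvec(\sigma)_a$ counts skipped eligible cells to the left of $a$ in its own row, not cells in the row below.

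The two missing ideas are the ones the paper uses.
\begin{itemize}
\item The column-alternating functionals $\mathcal A_\tau$ factor as $(\xv^\eta)(\mathbf p_\tau)\prod_c\det\bigl(r^{\beta_{i_{c,s}}}\bigr)$. Hence they vanish below $\yv$-degree $N$, which is where column-strictness enters. They are also lexicographically triangular in row words, with constant diagonal value $\Upsilon_\mu$.
\item The Newton lifts $\mathcal N_\sigma(\xv)\yv^{\rowvec(\sigma)}$ have as roots the skipped eligible columns of Haglund insertion. So $\mathcal N_\sigma(\mathbf p_\tau)=0$ whenever, at the first insertion step where $\tau$ and $\sigma$ differ, $\tau$ skips fewer eligible cells.
\end{itemize}
Together these give an invertible block-triangular pairing and hence independence in $\mathcal Q_\mu$. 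The filtered count $\dim F^\xv_d\mathcal Q_\mu=\#\{\sigma:|\invvec(\sigma)|\le d\}$ is then what lets you pass to the initial monomials $\mathfrak m_\sigma$. Nothing in your sketch plays the role of either ingredient.
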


The proof consists of two main ingredients. First, we construct a family of linear functionals $\mathcal{A}_\tau$ indexed by column-strict fillings $\tau \in \CST_\mu$, which are triangular by $\yv$-monomials. Then, we construct for each $\sigma \in \CST_\mu$ a certain \emph{Newton polynomial} $\mathcal{N}_\sigma(\xv)$ whose top-degree term is precisely $\xv^{\invvec(\sigma)}$. We then evaluate the functionals $\mathcal{A}_\tau$ on $\mathcal{N}_\sigma(\xv)\yv^{\rowvec(\sigma)}$ and show this matrix is block triangular with invertible diagonal blocks. This proves that the classes of the $\mathcal{N}_\sigma(\xv)\yv^{\rowvec(\sigma)}$ are linearly independent in $\mathcal Q_\mu$; a counting argument, followed by passage to the $\xv$-associated graded, finishes the proof.

For a fixed row word $\beta$, the collection of monomials $\xv^\eta\yv^\beta \in \Basis_\mu^{\mathrm{top}}$ turns out to have $\xv$-degree generating function equal to a product of $q$-numbers. We construct a second basis based on Bhattacharya--Ratheesh--Viswanath's insertion algorithm, and discuss this factorization with respect to the $q$-Whittaker polynomial in Section \ref{sec:fermionic}. This gives the following second basis theorem:

\begin{mainthm}[Flag basis]\label{thmb:intro}
    There is a second monomial basis for $V_\mu^{\mathrm{top}}$ of the form
    $$ \widehat{\mathcal{B}}_\mu = \bigsqcup_{\beta \in \mathrm{Lat}_\mu} \{\xv^\zeta \yv^\beta : \zeta\in\Z_{\ge0}^n,\ 0 \leq \zeta_a < g_a(\beta)\text{ for all }a\}$$
    where $\mathrm{Lat}_\mu$ denotes the set of $\mu$-\emph{lattice words}, and $g_a(\beta)$ are given by
    $$ \lambda_r^{(a)}(\beta) = \#\{i \leq a : \beta_i = r-1\},\qquad g_a(\beta) = \lambda^{(a)}_{\beta_a + 1}(\beta) - \lambda^{(a)}_{\beta_a + 2}(\beta)$$
    As a consequence, this gives a basis realization of the ``fermionic formula" of the Hilbert series for $V_\mu^{\mathrm{top}}$, where $[m]_q:=1+q+\cdots+q^{m-1}$:
    $$ \Hilb_q(V_\mu^{\mathrm{top}}) = \sum_{\beta \in \mathrm{Lat}_\mu} \prod_{a = 1}^n [g_a(\beta)]_q$$
\end{mainthm}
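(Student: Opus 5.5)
We will follow the strategy outlined for Theorem~\ref{thma:intro}, reusing its machinery. The aim is to show that the monomials in $\widehat{\mathcal{B}}_\mu$ are linearly independent in $V_\mu^{\mathrm{top}}$ and that there are exactly the right number of them. The count should come almost for free. By Theorem~\ref{thma:intro}, $\dim V_\mu^{\mathrm{top}} = |\CST_\mu|$. The fillings $\sigma \in \CST_\mu$ are grouped by their row word $\rowvec(\sigma)$, and these row words are exactly the $\mu$-lattice words $\beta \in \mathrm{Lat}_\mu$, since reading off the rows of a column-strict filling in the order $1,\dots,n$ produces a lattice word. We then need the combinatorial identity
\[
\#\{\sigma\in\CST_\mu : \rowvec(\sigma)=\beta\} = \prod_{a=1}^n g_a(\beta).
\]
This should follow by placing the entries $1,2,\dots,n$ one at a time. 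Entry $a$ goes into row $\beta_a+1$, and the column-strict condition leaves exactly $\lambda^{(a)}_{\beta_a+1}-\lambda^{(a)}_{\beta_a+2}$ admissible columns: those already occupied in row $\beta_a+2$ but not yet in row $\beta_a+1$, with the last slot forced. The refined statement with $q$ (that $\invvec$ restricted to fixed $\beta$ has generating function $\prod_a [g_a(\beta)]_q$) is the factorization remarked on just before the theorem. We would prove it by checking that, at step $a$, the choices of column contribute the values $0,1,\dots,g_a-1$ to the $a$-th coordinate of $\invvec$. This gives the Hilbert series formula once the basis statement is known.

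For independence, we will work $\yv$-degree by $\yv$-degree, exploiting the fact that the functionals $\mathcal{A}_\tau$ are triangular with respect to $\yv$-monomials. This reduces the problem to a single block: for fixed $\beta$, we must show that the classes of $\xv^\zeta\yv^\beta$ with $0\le \zeta_a<g_a(\beta)$ span the same space, modulo contributions from $\yv$-monomials lower in the triangular order, as the Haglund basis elements $\mathcal{N}_\sigma(\xv)\yv^{\beta}$ with $\rowvec(\sigma)=\beta$.

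Concretely, we would compute the diagonal block, namely the $\mathcal{A}_\tau$ with $\rowvec(\tau)=\beta$ evaluated on $f(\xv)\yv^\beta$. We expect this to be point evaluation of $f$ at a finite set of points $P_\beta\subset\C^n$ indexed by such $\tau$. These are presumably the orbit harmonics points, with the $x$-coordinate of entry $a$ determined by its column. If $P_\beta$ turns out to be a grid-like set, where coordinate $a$ ranges over $g_a(\beta)$ distinct values depending only on the earlier coordinates (a ``flag'' or triangular grid), then a standard iterated Lagrange/Newton interpolation argument shows that the monomials $\xv^\zeta$ with $\zeta_a<g_a(\beta)$ form a basis of functions on $P_\beta$. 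Passing to the $\xv$-associated graded then exactly mirrors the final step of Theorem~\ref{thma:intro}, giving the monomial basis in the top-degree quotient.

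The main obstacle is verifying this flag structure of $P_\beta$. The subtlety is that the set of column values available to entry $a$ depends on the earlier choices, whereas the count $g_a(\beta)$ does not. We would try to realize the Bhattacharya--Ratheesh--Viswanath insertion as a bijection that relabels columns so that, conditioned on coordinates $1,\dots,a-1$, coordinate $a$ takes $g_a(\beta)$ distinct values. Nonsingularity of the interpolation then follows by induction on $a$, using a Vandermonde factor at each step.
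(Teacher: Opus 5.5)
Your proposal has a genuine gap at the step ``passing to the $\xv$-associated graded then exactly mirrors the final step of Theorem~A.'' Independence of the classes in $\mathcal Q_\mu$, together with the count $\sum_\beta\prod_a g_a(\beta)=|\CST_\mu|$, only gives a basis of $\mathcal Q_\mu$. It does not give a basis of $\operatorname{gr}_{\xv}\mathcal Q_\mu\cong V_\mu^{\mathrm{top}}$, because a relation can push a homogeneous element into a lower filtration piece. The final step of Theorem~\ref{thm:ghbasis} works only because the candidates' $\xv$-degrees match $\dim F^{\xv}_d\mathcal Q_\mu=\#\{\sigma:|\invvec(\sigma)|\le d\}$, which makes them a filtered basis. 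So in your route you need
\[
\sum_{\beta}\prod_a[g_a(\beta)]_q=\sum_{\sigma\in\CST_\mu}q^{|\invvec(\sigma)|}
\]
as an \emph{input} to the basis theorem, not merely to read off the Hilbert series afterwards.

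Your proposed proof of this identity is false. You claim that at step $a$ the choices contribute $0,1,\dots,g_a-1$ to $\invvec(\sigma)_a$. But in Example~\ref{ex:row-filtration-32} ($\mu=(3,2)$, $\beta=00101$) one has $g_1(\beta)=1$, yet the filling with bottom row $4,2,1$ and top row $5,3$ has $\invvec=(2,1,0,0,0)$. The individual coordinates of $\invvec$ are not bounded by the gaps; only the total weights agree. The paper gets this equality of totals through the flag code. There $\zeta_a(\sigma)$ counts the reflected inversions with left entry $a$, and Proposition~8.1 of \cite{bhattacharya2024whittaker} (reflected inversions are equinumerous with inversions) gives $|\zeta(\sigma)|=|\invvec(\sigma)|$; see Lemma~\ref{lem:flag-code}. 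Alternatively, you could import the fermionic formula for the $m_{1^n}$-coefficient from that paper, but it cannot be derived coordinatewise.

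A second problem is the direction of insertion, and fixing it also resolves the obstacle you flagged. Placing $1,2,\dots,n$ in increasing order does not give $g_a(\beta)$ choices at step $a$. For $\mu=(3,1)$ and $\beta=0010$, label $1$ has three possible columns although $g_1=1$. Moreover, if $1$ and $2$ avoid column $0$, label $3$ cannot be placed at all. Your phrase ``already occupied in row $\beta_a+2$'' only makes sense when labels are placed in decreasing order. That is the paper's flag insertion: rows from top to bottom, labels decreasing within a row. At the step for $a$, row $r=\beta_a+1$ has $\lambda^{(a)}_r$ empty cells, of which exactly $\lambda^{(a)}_{r+1}$ are forbidden by the row above. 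So there are $g_a(\beta)$ eligible cells, independent of earlier choices.

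Hence $P_\beta$ is a flag grid when coordinates are revealed in flag-insertion order, not in the order $1,\dots,a-1$ you wrote. The Newton polynomials $\widehat{\mathcal N}_\sigma$, built from the skipped eligible columns, make the diagonal blocks triangular. Since each $\widehat{\mathcal N}_\sigma$ lies in the span of $\{\xv^\zeta:\zeta_a<g_a(\beta)\}$, your monomial form of the block statement follows too. With these two repairs your outline becomes the paper's proof.
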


We then consider a lexicographic filtration by $\yv$-monomials that is compatible with both bases; this gives a graded count of the monomials in the Haglund basis $\Basis_\mu^{\mathrm{top}}$ with a fixed $\yv$-monomial:

\begin{maincor}[Row-word enumeration]\label{corc:intro}
    For every $\beta\in\mathrm{Lat}_\mu$, we have
    \[
        \sum_{\substack{\sigma\in\CST_\mu\\
                        \rowvec(\sigma)=\beta}}
            q^{|\invvec(\sigma)|}
        =\prod_{a=1}^{n}[g_a(\beta)]_q.
    \]
\end{maincor}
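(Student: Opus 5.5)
The plan is to deduce Corollary~\ref{corc:intro} from Theorems~\ref{thma:intro} and~\ref{thmb:intro} through a filtration of $V_\mu^{\tp}$ by $\yv$-monomials. Fix a total order $\prec$ on the row words $\beta\in\Lat_\mu$; the lexicographic order is the natural first choice. For each $\beta$, let $F_{\succeq\beta}\subseteq V_\mu^{\tp}$ be the span of the images of all polynomials $f(\xv)\yv^{\beta'}$ with $\beta'\succeq\beta$ and $\yv$-degree $n(\mu)$. This is a filtration by $\xv$-graded subspaces, since the defining ideal $J_\mu$ is bihomogeneous. Set $\mathrm{gr}_\beta := F_{\succeq\beta}/F_{\succ\beta}$. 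The $\xv$-grading then passes to each quotient, and we get
$$\Hilb_q(V_\mu^{\tp})=\sum_{\beta}\Hilb_q(\mathrm{gr}_\beta).$$

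The key step is to show that each basis is adapted to this filtration. Concretely, we need that for every $\beta$ the set $\{\mathfrak m_\sigma:\rowvec(\sigma)=\beta\}$ descends to a basis of $\mathrm{gr}_\beta$, and that the same holds for $\{\xv^\zeta\yv^\beta: 0\le\zeta_a<g_a(\beta)\}$. Given a basis of $V_\mu^{\tp}$ made of monomials $\xv^\eta\yv^{\beta}$, the images in $\mathrm{gr}_\beta$ of those with $\yv$-part $\beta$ span $\mathrm{gr}_\beta$ modulo $F_{\succ\beta}$. Spanning for each $\beta$, together with the total count $\sum_\beta\dim\mathrm{gr}_\beta=\dim V_\mu^{\tp}$ equal to the basis size, forces each piece to be a basis. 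So it suffices to check spanning. Here the proof of Theorem~\ref{thma:intro} supplies exactly what is needed. The functionals $\mathcal A_\tau$ are triangular in $\yv$-monomials, and the evaluation matrix is block triangular with blocks indexed by row words. These two facts say that a monomial $\xv^\eta\yv^\beta$ can be rewritten modulo $J_\mu$ as a combination of basis elements with $\yv$-part $\beta$ plus terms with $\yv$-part strictly larger in $\prec$. That is precisely spanning of $\mathrm{gr}_\beta$.

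I would fix $\prec$ to be the same order used for the block triangularity, and I expect the same to be shown for the flag basis in its proof. The main obstacle is exactly this compatibility: one order has to work for both bases simultaneously, and the triangularity has to be in the direction that makes $F_{\succ\beta}$ absorb the error terms. If the proofs only give the block structure on the dual side, I would instead define the filtration dually, via the kernels of the blocks of functionals $\mathcal A_\tau$ with $\rowvec(\tau)\preceq\beta$. The same counting argument then goes through.

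Once both bases are adapted, the $\xv$-degree generating functions of the two bases of $\mathrm{gr}_\beta$ coincide, because both equal $\Hilb_q(\mathrm{gr}_\beta)$. The Haglund side gives $\sum_{\rowvec(\sigma)=\beta}q^{|\invvec(\sigma)|}$. The flag side gives $\prod_a\bigl(1+q+\cdots+q^{g_a(\beta)-1}\bigr)=\prod_a[g_a(\beta)]_q$. Equating these yields the corollary. One must also check that the row words of fillings in $\CST_\mu$ are exactly $\Lat_\mu$, so that no $\beta$ is missed on either side. This follows since column-strictness forces each prefix to satisfy the lattice condition, and any other $\beta$ gives empty sums on both sides.
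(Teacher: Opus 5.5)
Your route is the paper's own (Proposition~\ref{prop:row-word-filtration}, then comparing the two bases of each row-word subquotient). However, the one step you leave open, the direction of the order, is exactly where your default choice fails.

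Lemma~\ref{lem:row-word-triangularity} says that $\mathcal A_\tau(\xv^\alpha\yv^\gamma)\neq0$ forces $\gamma\ge_{\mathrm{lex}}\rowvec(\tau)$. Two consequences follow:
\begin{itemize}
\item the matrix $(\mathcal A_\tau(\mathcal G_\sigma))_{\tau,\sigma}$ is block upper triangular in increasing lex order, and hence so is its inverse;
\item the vector $(\mathcal A_\tau(\xv^\alpha\yv^\gamma))_\tau$ is supported on $\rowvec(\tau)\le_{\mathrm{lex}}\gamma$.
\end{itemize}
Back substitution therefore expands $\xv^\alpha\yv^\gamma$ using only $\mathcal G_\sigma$ with $\rowvec(\sigma)\le_{\mathrm{lex}}\gamma$. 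The error terms are lex-\emph{smaller}, so the filtration must collect lex-lower sets of words, as in the paper's $\mathscr F_{\le\beta}$. In your notation, $\prec$ must be reverse lex.

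With $\prec$ equal to the lexicographic order on $\Lat_\mu$, your spanning claim is false already for $\mu=(2,2)$. Since $0101$ is the lex-largest word in $\Lat_{(2,2)}=\{0011,0101\}$, you get $F_{\succ0101}=0$, so $\mathrm{gr}_{0101}$ is the image of $\C[\xv]\,y_2y_4$ in $V_{(2,2)}^{\tp}$. Now $\partial_{y_2}\partial_{y_4}\Delta_{(2,2)}=\pm(x_3-x_1)(x_4-x_2)$, and its first $x$-derivatives span a $2$-dimensional space. Hence the classes $x_iy_2y_4$ span a $2$-dimensional space in $\xv$-degree $1$. But $x_1y_2y_4$ is the only Haglund monomial with row word $0101$ in that degree. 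In fact $\C[\xv]\,y_2y_4$ has Hilbert series $[2]_q^2=\prod_a[g_a(0011)]_q$, not $\prod_a[g_a(0101)]_q=[2]_q$.

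The other obstacle you name, that a single order must serve both bases, is not a real one. In $\mathcal A_\tau(\mathcal N_\sigma(\xv)\yv^{\rowvec(\sigma)})=\mathcal N_\sigma(\mathbf p_\tau)\,\mathcal A_\tau(\yv^{\rowvec(\sigma)})$ the block pattern comes entirely from the $\yv$-factor. The same factorization holds for $\widehat{\mathcal G}_\sigma$, so Lemma~\ref{lem:row-word-triangularity} gives the identical block structure for both families.

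Be careful also that the $\mathcal A_\tau$ are functionals on $\mathcal Q_\mu$, not on $V_\mu=R/J_\mu$. Back substitution produces an expansion in the $\mathcal G_\sigma$ inside $\mathcal Q_\mu$, not a rewriting of monomials ``modulo $J_\mu$''. To transfer it to $V_\mu^{\tp}$:
\begin{itemize}
\item use the filtered-basis property from the proof of Theorem~\ref{thm:ghbasis}, so that only $\sigma$ with $|\invvec(\sigma)|\le|\alpha|$ appear;
\item then take the degree-$|\alpha|$ class in $\operatorname{gr}_{\xv}\mathcal Q_\mu\cong V_\mu^{\tp}$.
\end{itemize}
For the same reason, your dual fallback via kernels of the $\mathcal A_\tau$ does not directly define an $\xv$-graded filtration of $V_\mu^{\tp}$.

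With the order reversed and this transfer made explicit, your counting argument is exactly the paper's proof.
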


The paper is organized as follows.  Section~\ref{sec:background} recalls the
Haglund statistics, the modified Macdonald polynomial, and the
Garsia--Haiman module.  Section~\ref{sec:orbit-newton} develops the
orbit-harmonic and Newton interpolation argument and proves Theorem~A.  The
final sections discuss the fermionic formula, $q$-Whittaker polynomials,
and further directions.

\subsection{Acknowledgements}

The author would like to thank Erik Carlsson, Jim Haglund, and Brendon Rhoades (in no particular order) for helpful conversations. The author would also like to thank the barista in Gothenburg, Sweden whose coffee spawned many important ideas for this paper.

\subsection{AI usage statement}

The author used ChatGPT in the editing and preparation of this draft, including many of the figures. Additionally, ChatGPT was used extensively to produce code and to test/verify the results, and in organizing the ideas of this paper. All mathematical content is the sole responsibility of the author.

\section{Background}\label{sec:background}

In this section, we briefly record our combinatorial conventions, as well as recall necessary definitions for the proof. For more background on Garsia--Haiman modules, the reader is referred to Haiman's notes \cite{haiman1999geometry}. For more background on modified Macdonald polynomials, the reader is referred to \cite{haglund2005combinatorial}.

\subsection{Combinatorial preliminaries}

Let \(\mu\vdash n\), and let \(D_\mu\) denote its Ferrers diagram in French notation:
\[
    D_\mu=\{(c,h):0\le h<\ell(\mu),\ 0\le c<\mu_{h+1}\}.
\]
We use the variables $c$ and $h$ to denote the \emph{column} and the \emph{height} respectively of the box. A \emph{filling} of shape \(\mu\) is a map \(\sigma:D_\mu\to \Z_{>0}\); it is \emph{standard} if its entries are \([n]\).  We write \(\CST_\mu\) for the set of standard fillings whose columns strictly decrease when read from top to bottom; equivalently, if \(u\) is directly above \(v\), then \(\sigma(u)>\sigma(v)\).

\begin{defn}[descents, arms, legs, and row heights]\label{def:descents-row-heights}\cite{haglund2005combinatorial}
Let \(\sigma\) be a filling of \(\mu\).  A \emph{descent} is a box \(u\) directly above another box \(v\) such that \(\sigma(u)>\sigma(v)\).  We write \(\Des(\sigma)\) for the set of descents.  The \emph{leg} \(\ell(u)\) of a box \(u\) is the number of boxes strictly above \(u\) in its column, and the \emph{arm} $a(u)$ is the number of boxes strictly to the right in its row.  The \emph{height} \(\rowht(u)\) of \(u=(c,h)\) is \(h\).  If \(\sigma(u)=a\), we also write \(\rowht_\sigma(a)=\rowht(u)\).
\end{defn}

The \emph{major index} of a filling $\sigma$ is
\[
    \maj(\sigma):=\sum_{u\in\Des(\sigma)}(\ell(u)+1).
\]
For \(\sigma\in\CST_\mu\), every box not in the bottom row is a descent, so that \(\maj(\sigma)=n(\mu)\), the largest possible \(t\)-degree. For our analysis of the top-degree case, we therefore only consider column-strict fillings, which have maximal major index.

\begin{defn}[Haglund's inversion statistic]
    Let $\sigma$ be a $\mu$-filling. An \emph{inversion pair} is a pair of boxes $(u,v)$ with $a =\sigma(u) > \sigma(v) = b$ in one of the following configurations:
    \[
    \begin{ytableau}
        a & \none & \none & b
    \end{ytableau}
    \qquad\qquad
    \begin{ytableau}
        \none & \blankcell & \dots & \blankcell & a \\
        b
    \end{ytableau}
    \]
    where in the configuration on the right, $b$ occurs in the row directly below $a$. The set of inversions is defined to be $\Inv_{\mathrm{pair}}(\sigma)$, and we define the inversion statistic $\inv$ to be
    $$ \inv(\sigma) := |\Inv_{\mathrm{pair}}(\sigma)| - \sum_{u \in \Des(\sigma)} a(u)$$
\end{defn}

\begin{defn}\label{def:top-invtriples}
Let \(\sigma\in\CST_\mu\).  A \emph{top-degree inversion triple} is a configuration of entries
\[
\begin{ytableau}
    c & \none & \none & b \\
    a
\end{ytableau}
\]
with \(c>b>a\).  If \(b\) and \(c\) lie in the bottom row, the lower entry \(a\) is absent, and we write the triple as \((-\infty,b,c)\).  The set of all such triples is denoted \(\Inv(\sigma)\).
\end{defn}

One recognizes that this is simply Haglund's inversion statistic after restricting to the top \(t\)-degree part; see \cite[Section~2, Equations~(15)--(17)]{haglund2005combinatorial}.  For our purposes, we will need to remember which middle entry contributes each inversion.  Define
\[
    d_i(\sigma):=\#\{(a,b,c)\in\Inv(\sigma):b=i\}.
\]
Then set
\[
    \invvec(\sigma):=(d_1(\sigma),\ldots,d_n(\sigma)),
    \qquad
    \rowvec(\sigma):=(\rowht_\sigma(1),\ldots,\rowht_\sigma(n)).
\]
The monomial attached to \(\sigma\) is
\begin{equation}\label{def:xinvtymajt}
    \mathfrak m_\sigma
    :=\xv^{\invvec(\sigma)}\yv^{\rowvec(\sigma)}
    =\bigg(\prod_{(a,b,c)\in\Inv(\sigma)}x_b\bigg)\bigg(
      \prod_{i=1}^n y_i^{\rowht_\sigma(i)}\bigg).
\end{equation}

\begin{example}\label{ex:topdegbasis22}
If \(\mu=(2,2)\), then \(\CST_\mu\) consists of the following six fillings:
\begin{center}
\begingroup\setlength{\tabcolsep}{4.8pt}
\begin{tabular}{|c||c|c|c|c|c|c|}
\hline
\(\sigma\) &
\begin{ytableau}3&4\\1&2\end{ytableau} &
\begin{ytableau}4&3\\1&2\end{ytableau} &
\begin{ytableau}3&4\\2&1\end{ytableau} &
\begin{ytableau}4&3\\2&1\end{ytableau} &
\begin{ytableau}2&4\\1&3\end{ytableau} &
\begin{ytableau}4&2\\3&1\end{ytableau} \\
\hline
\(\Inv(\sigma)\) &
\(\varnothing\) &
\(\{(1,3,4)\}\) &
\(\{(-\infty,1,2)\}\) &
\(\{(-\infty,1,2),(2,3,4)\}\) &
\(\varnothing\) &
\(\{(-\infty,1,3)\}\) \\
\hline
\(\mathfrak m_\sigma\) &
\(y_3y_4\) &
\(x_3y_3y_4\) &
\(x_1y_3y_4\) &
\(x_1x_3y_3y_4\) &
\(y_2y_4\) &
\(x_1y_2y_4\) \\
\hline
\end{tabular}
\endgroup
\end{center}
The bigraded degree generating function is \(t^2(2+3q+q^2)\), agreeing with the bigraded Hilbert series of \(V_{(2,2)}^{\tp}\).
\end{example}

\subsection{Modified Macdonald polynomials}

We denote the \emph{modified Macdonald polynomial} by \(\Ht_\mu(X;q,t)\).  The Haglund--Haiman--Loehr formula \cite[Theorem~2.2, Equation~(14)]{haglund2005combinatorial} expresses it as a weighted sum over fillings of \(\mu\):
\[
    \Ht_\mu(X;q,t)
    =\sum_{\sigma:\mu\to\Z_{>0}}
        q^{\inv(\sigma)}t^{\maj(\sigma)}x^\sigma,
\]
where \(x^\sigma=\prod_{u\in D_\mu}x_{\sigma(u)}\).  For the $m_{1^n}$-coefficient in top \(t\)-degree, the fillings which appear are precisely the column-strict standard fillings \(\CST_\mu\), and the \(q\)-weight is recorded by the inversion tuple above.  Thus
\[
    [t^{n(\mu)}]\Ht_\mu(X;q,t)\big|_{m_{1^n}}
    =\sum_{\sigma\in\CST_\mu}q^{|\invvec(\sigma)|}.
\]
We will use this identity for the Hilbert-series of $V_\mu^{\mathrm{top}}$ to complete the basis proof.

\subsection{\texorpdfstring{$q$}{q}-Whittaker polynomials}\label{subsec:q-whittaker-background}

Let $P_\mu(X;q,t)$ be the $P$-Macdonald polynomials, defined in \cite{macdonald1988new}. They are related to the modified Macdonald polynomials via the following transformation; see \cite[Chapter~VI, Section~8]{macdonald1995symmetric} and \cite[Equation~(60)]{haglund2005combinatorial}:

$$ P_\mu[X;q,t] = \frac{t^{n(\mu)} \widetilde{H}_\mu[X(1-t);q,t^{-1}]}{\prod_{u \in D_\mu}\big(1 - q^{a(u)}t^{\ell(u)+1}\big)}$$

where the brackets denote a \emph{plethystic substitution}. Classically, the $q$-Whittaker polynomials are the $t = 0$ specializations of $P_\mu$ \cite{macdonald1995symmetric}:

$$ W_\mu(X;q) := P_\mu(X;q,0)$$

They are generalizations of the classical special functions considered by Whittaker and Jacquet \cite{whittaker1903hypergeometric,jacquet1967whittaker}. They appear in a wide variety of contexts; quantum group analogues of Whittaker functions and their relationship with certain $q$-deformed Toda operators were developed by Etingof \cite{etingof1999whittaker} and Sevostyanov \cite{sevostyanov2000whittaker}. Explicit formulae for the related ``$q$-deformed $\mathfrak{gl}_m$-Whittaker functions" were given by Gerasimov--Lebedev--Oblezin \cite{gerasimov2010whittakerI}, and normalized versions of these functions were identified with $W_\mu(X;q)$ in \cite{gerasimov2011whittakerIII}.

For our purposes, the $q$-Whittaker polynomial turns out to be the top $t$-degree of the modified Macdonald polynomials:

\begin{equation}\label{eq:q-whittaker-top-coefficient}
    W_\mu(X;q)
       =[t^{n(\mu)}]\Ht_\mu(X;q,t);
\end{equation}
see F. Bergeron's 2020 survey of $q$-Whittaker polynomials \cite[Section~3]{bergeron2020whittaker}. Consequently,
\[
    \Hilb_q(V_\mu^\mathrm{top})
       =[m_{1^n}]W_\mu(X;q),
\]
where $[m_{1^n}]$ denotes coefficient extraction in the
monomial symmetric-function basis.

\subsection{Garsia--Haiman modules and the \texorpdfstring{\(n!\)}{n!} theorem}\label{subsec:gh-background}

Let \(R=\C[\xn,\yn]\).  Associate to \(\mu\) the set of lattice points
\[
    D_\mu=\{(p,q):0\le q<\ell(\mu),\ 0\le p<\mu_{q+1}\}.
\]
For example, if \(\mu=(2,2,1)\), then
\[
\begin{ytableau}
    \scriptstyle (0,2) \\
    \scriptstyle (0,1) & \scriptstyle (1,1) \\
    \scriptstyle (0,0) & \scriptstyle (1,0)
\end{ytableau}.
\]
Choose any ordering \((p_1,q_1),\ldots,(p_n,q_n)\) of \(D_\mu\), and define the generalized Vandermonde determinant
\[
    \Delta_\mu:=\det\bigl(x_i^{p_j}y_i^{q_j}\bigr)_{1\le i,j\le n}.
\]
The order of the boxes changes \(\Delta_\mu\) only by sign.

The dual Garsia--Haiman module is the span of all partial derivatives of \(\Delta_\mu\):
\[
    V_\mu' := \C[\partial x_1,\ldots,\partial x_n,
                    \partial y_1,\ldots,
                    \partial y_n]\cdot \Delta_\mu.
\]
Equivalently, let
\[
    J_\mu:=\{f\in R:f(\partial \xv,\partial \yv)\Delta_\mu=0\}
\]
be the annihilating ideal of \(\Delta_\mu\).  Then
\[
    V_\mu:=R/J_\mu
\]
is the Garsia--Haiman module.  We grade \(V_\mu\) by \(x\)-degree and \(y\)-degree and write \(V_\mu^{(i,j)}\) for the bihomogeneous piece.  The piece studied in this paper is
\[
    V_\mu^{\tp}:=\bigoplus_i V_\mu^{(i,n(\mu))},
\]
the maximal \(y\)-degree piece.

Haiman's \(n!\) theorem \cite[Theorem~3.2]{haiman2001hilbert} states that \(\dim_\C V_\mu=n!\), and moreover the bigraded Frobenius characteristic of \(V_\mu\) is \(\Ht_\mu(X;q,t)\).  Therefore the Hilbert series of \(V_\mu^{\tp}\) is the top \(t\)-degree coefficient of \([m_{1^n}]\Ht_\mu\).  This is why the monomials in Equation~\eqref{def:xinvtymajt} have the correct cardinality and grading to form a basis, once spanning is proved.

\subsection{The orbit harmonics quotient}\label{subsec:filtered-orbit-harmonics}

We briefly recall the orbit harmonics philosophy, before specializing it for the uses of this paper.

The \emph{orbit harmonics deformation} associates a graded algebra to a finite set of points. Orbit harmonics has been the subject of many recent papers \cite{liu2025involution,oh2022cyclic,reiner2026ehrhart,reineke2023zonotopal,rhoades2024viennot,rhoades2025big}, including in the study of cohomology rings of interesting combinatorial varieties, see \cite{chou2026torus,CMR,griffin2021ordered}.

Let $S = \C[z_1,\dots,z_d]$, and $\mathcal{O} \subseteq \C^d$ be a nonempty finite set. We have the vanishing ideal
$$ I(\mathcal{O}) := \{f \in S: f(p) = 0\text{ for every }p \in \mathcal{O}\}$$
For a nonzero polynomial $f$, let $\tau(f)$ denote its highest-degree homogeneous component, and define
$$ \mathrm{gr}I(\mathcal{O}) := \langle \tau(f) : 0 \neq f \in I(\mathcal{O})\rangle$$

The \emph{orbit harmonics quotient} $S/\mathrm{gr}I(\mathcal{O})$ is the associated graded ring of $\C[\mathcal{O}] = S/I(\mathcal{O})$. Geometrically, it is the coordinate ring of the flat limit of $\varepsilon\mathcal{O}$ as $\varepsilon \to 0$; one imagines a flat deformation of the points of $\mathcal{O}$ to a length-$|\mathcal{O}|$ scheme at the origin. In particular, we have
$$ \dim_\C S/ \mathrm{gr}I(\mathcal{O}) = |\mathcal{O}|$$
and the ring $S/\mathrm{gr}I(\mathcal O)$ is nonreduced whenever $|\mathcal{O}|>1$. If a finite subgroup $G \subseteq GL_d(\C)$ preserves $\mathcal{O}$, then we have the isomorphism of $G$-modules
$$ \C[\mathcal{O}] \cong_G S/\mathrm{gr}I(\mathcal{O})$$
so that the permutation representation on $\mathcal{O}$ is equipped with a grading.

For a partition $\mu \vdash n$, we recall the \emph{diagram} of $\mu$ is

$$ D_\mu = \{(c,h) \in \Z_{\geq 0}^2: 0 \leq h < \ell(\mu), 0 \leq c < \mu_{h+1}\} $$

\begin{figure}[htbp]
\centering
\begingroup

\newcommand{\OrbitCard}[3]{%
  \begin{scope}[shift={#1}]
    \path[
        draw=black!45,
        fill=white,
        line width=.45pt,
        rounded corners=2pt
    ] (0,0) rectangle (3.65,2.65);

    \node[anchor=north east,inner sep=5pt]
      at (3.65,2.65) {$#2$};

    \foreach \c/\h/\lab in {#3}{%
      \fill
        ({.42+.86*\c},{.40+.83*\h})
        circle (1.8pt);

      \node[above right=1.5pt,inner sep=0pt]
        at ({.42+.86*\c},{.40+.83*\h})
        {$\lab$};
    }
  \end{scope}%
}

\resizebox{\linewidth}{!}{%
\begin{tikzpicture}[
    >=Stealth,
    every node/.style={font=\small}
]


  \OrbitCard{(0,1.40)}{\rho_3}{%
    0/0/7,1/0/2,2/0/5,3/0/3,
    0/1/6,1/1/1,2/1/8,
    0/2/4}

  \OrbitCard{(.75,.70)}{\rho_2}{%
    0/0/2,1/0/8,2/0/4,3/0/6,
    0/1/1,1/1/5,2/1/7,
    0/2/3}

  \OrbitCard{(1.50,0)}{\rho_1}{%
    0/0/1,1/0/2,2/0/3,3/0/4,
    0/1/5,1/1/6,2/1/7,
    0/2/8}

  \node[font=\large] at (.40,.32) {$\cdots$};

  \node[align=center] at (2.58,-.52)
    {$\mathcal O_\mu\subseteq(\mathbb C^2)^8$};

  \node[align=center] at (2.58,-1.00)
    {all $8!$ labelings of $D_\mu$};


  \draw[
      ->,
      thick,
      decorate,
      decoration={
          snake,
          amplitude=1.2pt,
          segment length=8pt,
          pre length=2pt,
          post length=6pt
      }
  ] (5.50,1.98) -- (8.42,1.98);

  \node[align=center] at (6.96,2.60)
    {flat\\degeneration};

  \node at (6.96,1.48)
    {$\mu=(4,3,1)$};


  \node[
      cloud,
      cloud puffs=16,
      cloud puff arc=110,
      aspect=1.55,
      draw=black!50,
      fill=black!8,
      line width=.55pt,
      minimum width=3.35cm,
      minimum height=2.30cm,
      inner sep=0pt
  ] at (10.60,1.98) {};

  \fill (10.60,1.98) circle (2pt);

  \node[below right=3pt,inner sep=0pt]
    at (10.60,1.98) {$\mathbf 0$};

  \node[align=center] at (10.60,-.52)
    {$\operatorname{Spec}V_\mu\subseteq(\mathbb C^2)^8$};

  \node[align=center] at (10.60,-1.00)
    {supported at $\mathbf 0$, of length $8!$};

\end{tikzpicture}%
}

\endgroup
\caption{The orbit associated with $\mu=(4,3,1)$ degenerating to
$\operatorname{Spec}V_\mu$. Each card depicts a bijective labeling
$\rho:D_\mu\to[8]$ and represents one point $p_\rho$ of
$(\mathbb C^2)^8$; the stack stands for all $8!$ labelings.
The special fiber is supported at the origin of $(\mathbb C^2)^8$
and has length $8!$; the cloud denotes the ``scheme-y" point.}
\label{fig:orbit-labelings-degeneration}
\end{figure}

Write the coordinates of the points in $D_\mu$ in some order as:

$$ (a_1,b_1),\dots,(a_n,b_n)$$
and define
$$ \mathbf{p}_\mu := ( (a_1,b_1),\dots,(a_n,b_n)) \in (\C^2)^n$$

We define the \emph{orbit associated with $\mu$} or the \emph{$\mu$-diagram orbit} to be
$$ \mathcal{O}_\mu := \symm_n \cdot \mathbf{p}_\mu $$
where $\symm_n$ simply permutes the points. One sees that we may identify $(\C^2)^n$ with $\C^{2n}$, and this action becomes the usual diagonal action. Its coordinate ring is given by

$$ \mathcal{R}_\mu := \C[\mathcal{O}_\mu] = R/I(\mathcal{O}_\mu), \qquad R := \C[\xn,\yn]$$

where $I(\mathcal{O}_\mu)$ is the ideal of functions vanishing on the orbit $\mathcal{O}_\mu$.

\FloatBarrier
\section{Orbit-harmonics and the top-degree basis}\label{sec:orbit-newton}

In this section we prove Theorem~\ref{thma:intro}. We call the resulting monomial basis the \emph{Haglund basis}.

\subsection{Bigradings and the orbit harmonics ring}

Recall that $R = \C[\xn,\yn]$. Let

$$ F_a^\xv R := \{f \in R: \deg_\xv f \leq a\},\qquad F_a^\yv R= \{f \in R:\deg_\yv f \leq a\}$$

We may equip the orbit harmonics quotient $\mathcal{R}_\mu$ with these two filtrations by studying their images under the canonical quotient maps:

$$F_a^\xv \mathcal{R}_\mu := \mathrm{im}( F_a^\xv R \to \mathcal{R}_\mu),\qquad F_a^\yv \mathcal{R}_\mu := \mathrm{im}( F_a^\yv R \to \mathcal{R}_\mu)$$

with $F^\xv_{-1} = F^\yv_{-1} = 0$. If $V$ is a vector space with a filtration by degree $F$, we define the \emph{associated graded} to be
$$ \mathrm{gr}_FV:= \bigoplus_{d \geq 0} F_dV/F_{d-1}V$$

Concretely, we set

$$ \mathrm{gr}_\xv := \bigoplus_{d \geq 0} F^\xv_d / F^\xv_{d-1}, \qquad \mathrm{gr}_\yv := \bigoplus_{d \geq 0} F^\yv_d/F^\yv_{d-1}$$

We also have the associated bigraded algebra 

$$ \mathrm{bigr} \mathcal{R}_\mu := \bigoplus_{a,b\geq0} \frac{F^\xv_a \mathcal{R}_\mu \cap F^\yv_b \mathcal{R}_\mu}{(F^\xv_{a-1}\mathcal{R}_\mu \cap F^\yv_b\mathcal{R}_\mu) + (F^\xv_{a}\mathcal{R}_\mu \cap F^\yv_{b-1}\mathcal{R}_\mu)}$$

We may take iterated associated gradeds, where the second filtration is the polynomial-degree filtration on the first associated-graded quotient. These structures are compatible for the diagram orbit
$\mathcal R_\mu$, as stated in the following identification:

\begin{thm}\cite{garsia1996natural,haiman2001hilbert}\label{thm:bigr}
    There are natural $\symm_n$-equivariant isomorphisms of bigraded algebras
    $$ \mathrm{bigr} \mathcal{R}_\mu \cong \mathrm{gr}_\xv \mathrm{gr}_\yv \mathcal{R}_\mu \cong \mathrm{gr}_\yv \mathrm{gr}_\xv \mathcal{R}_\mu \cong V_\mu$$
\end{thm}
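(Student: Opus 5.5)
The plan is the classical Garsia--Haiman route \cite{garsia1996natural}: identify the inverse system of $\mathrm{gr} I(\mathcal O_\mu)$ with respect to suitable filtrations, and show that it is spanned by the derivatives of $\Delta_\mu$. I will work with the three quotients $\mathrm{bigr}\,\mathcal R_\mu$, $\mathrm{gr}_\xv\mathrm{gr}_\yv\mathcal R_\mu$ and $\mathrm{gr}_\yv\mathrm{gr}_\xv\mathcal R_\mu$. Each of them is a quotient $R/I'$, where $I'$ is the ideal generated by the appropriate leading (bi)homogeneous components of elements of $I(\mathcal O_\mu)$. For $\mathrm{bigr}$ one takes the top $\yv$-component of the top $\xv$-component, or the pieces lying in the intersected filtrations. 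Each of these quotients is $\symm_n$-equivariantly a successive associated graded of $\C[\mathcal O_\mu]$, which has dimension $n!$. So all three have dimension $n!$, since passing to an associated graded preserves dimension and equivariance.

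The key step is to show that every such leading-form ideal $I'$ annihilates $\Delta_\mu$, i.e.\ $I' \subseteq J_\mu$. Then $R/I' \twoheadrightarrow R/J_\mu = V_\mu$, and Haiman's $n!$ theorem $\dim V_\mu = n!$ forces equality. For the annihilation, take $f\in I(\mathcal O_\mu)$ with $\xv$-degree $a$ and $\yv$-degree $b$ leading bihomogeneous part $g$. Pair $f$ against the exponential kernel. The identity
\[
\sum_{w\in\symm_n}\operatorname{sgn}(w)\, e^{\langle w\mathbf p_\mu,(\xv,\yv)\rangle}
\]
is annihilated by $f(\partial\xv,\partial\yv)$, since $f$ vanishes on the orbit. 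Now introduce a scaling $x_i\mapsto s x_i$, $y_i\mapsto t y_i$. The coefficient of the lowest total degree in this antisymmetrized exponential is a nonzero multiple of $\Delta_\mu$: antisymmetrization kills all monomials of lower degree, because $\Delta_\mu$ is the minimal-degree alternant supported on exponents from $D_\mu$. Extracting the extreme coefficient in the two scaling parameters shows that $g(\partial)\Delta_\mu=0$. Doing the extraction in the order ``$\yv$ first, then $\xv$'' handles $\mathrm{gr}_\xv\mathrm{gr}_\yv$, and the reversed order handles the other iterated quotient. For $\mathrm{bigr}$, the defining subquotient gives elements whose bidegree-$(a,b)$ component is a leading form in both filtrations simultaneously, and the same argument applies.

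Finally, I will check that the identifications are natural. Each map is the quotient map $R\to V_\mu$ restricted to the graded pieces, so all three isomorphisms factor through $R$. They are therefore compatible, bigraded, and $\symm_n$-equivariant, since $\Delta_\mu$ is $\symm_n$-alternating and $J_\mu$ is $\symm_n$-stable.

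The main obstacle is the lowest-degree extraction step. I need that the lowest-order part of the alternating exponential sum, in the bigrading, is exactly $\Delta_\mu$ up to a scalar. I also need that the leading-form ideals in the iterated and bifiltered settings are genuinely generated by leading forms of elements of $I(\mathcal O_\mu)$, so that the pairing argument applies to every generator. For the former, I would write the sum as $\sum_{\alpha}\frac{1}{\alpha!}\det(\cdots)$ and use that any alternant in $x_i^{p}y_i^{q}$ must use $n$ distinct exponent pairs. The minimal bidegree with distinct pairs drawn from a finite lattice configuration and nonvanishing coefficient is attained only by $D_\mu$ itself, because $D_\mu$ is a down-set minimizing both coordinates' sums. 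For the latter, I would check directly that the $\symm_n$-module dimensions agree at each stage.
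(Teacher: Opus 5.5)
Your overall strategy is the Garsia--Haiman argument that the paper only cites: show each kernel lies in $J_\mu$ by pairing against $E=\sum_{w}\operatorname{sgn}(w)e^{\langle w\mathbf{p}_\mu,(\xv,\yv)\rangle}$, then count dimensions with the $n!$ theorem. However, the key lemma is justified incorrectly and is stated too weakly. By Cauchy--Binet, $E=\sum_S c_S\Delta_S$, summed over $n$-element sets $S\subset\Z_{\ge0}^2$ of exponent pairs. Here $\Delta_S=\det(x_i^py_i^q)$, and $c_S$ is, up to $\prod_{(p,q)\in S}(p!\,q!)^{-1}$, the determinant of the values of the monomials $x^py^q$, $(p,q)\in S$, at the $n$ points of $D_\mu$. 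The exponents are not confined to $D_\mu$, and $\Delta_\mu$ is not a minimal-degree alternant. For $\mu=(3)$, $S=\{(0,0),(1,0),(0,1)\}$ gives a nonzero alternant of degree $2<3$. It drops out of $E$ only because $c_S=0$, which is a property of the point set, not of antisymmetrization. What you need is a two-sided statement: $E_{j,k}=0$ unless $j\ge n(\mu')$ and $k\ge n(\mu)$, and $E_{n(\mu'),n(\mu)}$ is a nonzero multiple of $\Delta_\mu$. The missing idea is a count.
\begin{itemize}
\item On the points of $D_\mu$, polynomials in $x,y$ of $y$-degree at most $k$ span a space of dimension $\sum_c\min(k+1,h_c)=\mu_1+\cdots+\mu_{k+1}$. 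So if $c_S\neq0$, at most that many pairs of $S$ have $q\le k$, and therefore $\sum q\ge n(\mu)$.
\item Equality $\sum q=n(\mu)$ holds only if exactly $\mu_{k+1}$ pairs have $q=k$ for each $k$. Distinctness then gives $\sum p\ge\sum_k\binom{\mu_{k+1}}{2}=n(\mu')$, with equality only for $S=D_\mu$.
\item Transposing gives $\sum p\ge n(\mu')$ for every $S$ with $c_S\neq0$.
\item Finally, $c_{D_\mu}\neq0$: the products $\prod_{i<p}(x-i)\prod_{j<q}(y-j)$, $(p,q)\in D_\mu$, have a triangular evaluation matrix on $D_\mu$ with diagonal entries $p!\,q!$.
\end{itemize}

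The second gap concerns $\mathrm{bigr}\,\mathcal{R}_\mu$, which the paper actually uses in Corollary~\ref{cor:top-filtration-layer}. It is not a priori a quotient $R/I'$ by leading forms. For the two points $(0,0),(1,1)\in\C^2$, both $\C[x]$ and $\C[y]$ already surject onto the coordinate ring, so the bidegree-$(0,0)$ piece of $\mathrm{bigr}$ is $2$-dimensional while $\C[x,y]_{0,0}=\C$. Also, taking the top $\yv$-part of the top $\xv$-part, as you suggest, gives $\mathrm{gr}_\yv\mathrm{gr}_\xv$, not $\mathrm{bigr}$. A bihomogeneous $g$ of bidegree $(a,b)$ dies in $\mathrm{bigr}$ exactly when it is the $(a,b)$-component of some $h\in I(\mathcal O_\mu)$ whose other components $(a',b')$ satisfy only ``$a'<a$ or $b'<b$''. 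Such $h$ may have larger $\xv$-degree or larger $\yv$-degree elsewhere, so $g$ is leading for neither filtration. Neither the total-degree lemma nor one-parameter-at-a-time scaling isolates it. The two-sided lemma repairs both issues:
\begin{itemize}
\item In the bidegree-$(n(\mu')-a,\,n(\mu)-b)$ component of $h(\partial)E=0$, only $h_{a,b}(\partial)E_{n(\mu'),n(\mu)}$ survives. Hence $g\in J_\mu$, so $K:=\ker(R\to\mathrm{bigr}\,\mathcal{R}_\mu)\subseteq J_\mu$.
\item Then $n!=\dim V_\mu\le\dim R/K\le\dim\mathrm{bigr}\,\mathcal{R}_\mu=n!$. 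The last equality holds because, by the modular law, $\mathrm{bigr}$ is the associated graded of $\mathrm{gr}_\yv\mathcal{R}_\mu$ for the induced $\xv$-filtration.
\item This yields surjectivity of $R\to\mathrm{bigr}\,\mathcal{R}_\mu$ and $K=J_\mu$ at once.
\end{itemize}
The same extraction handles both iterated gradeds, since the top $\xv$-part of the top $\yv$-part of $f$, and vice versa, sits at a componentwise-maximal bidegree of the support of $f$.
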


\begin{proof}
    Garsia and Haiman identify these constructions assuming the $n!$ conjecture \cite[Theorem~3.2, Equation~(3.29)]{garsia1996natural}. Haiman later establishes the $n!$ theorem \cite[Theorem~3.2]{haiman2001hilbert}.
\end{proof}

\begin{rem}
    As discussed by Garsia and Haiman \cite[Remark~3.1 and Theorem~3.2]{garsia1996natural}, the compatibility of the ``bigr" construction with the polynomial-degree filtrations and with $V_\mu$ is special to these partition-diagram orbits; this identification is a remarkable feature of the Hilbert scheme of points in $\C^2$. For $\C^3$, for instance, the analogous identification and equality of dimensions fail in general \cite[Section~5.2]{haiman2001hilbert}. Our construction heavily relies on this compatibility.
\end{rem}

For $0\le c<\mu_1$, let
\[
    h_c:=\mu'_{c+1}
\]
be the number of boxes in column $c$, and define
\begin{equation}\label{eq:top-y-quotient}
    N:=n(\mu)
      =\sum_{c=0}^{\mu_1-1}\binom{h_c}{2}, \qquad \mathcal Q_\mu
      :=F_N^\yv\mathcal{R}_\mu/
        F_{N-1}^\yv\mathcal{R}_\mu.
\end{equation}
This is a filtered vector space, with $\xv$-filtration
\begin{equation}\label{eq:top-x-filtration}
    F_d^\xv\mathcal Q_\mu
    :=
    \frac{
      (F_d^\xv\mathcal{R}_\mu\cap F_N^\yv\mathcal{R}_\mu)
        +F_{N-1}^\yv\mathcal{R}_\mu
    }{
      F_{N-1}^\yv\mathcal{R}_\mu
    }.
\end{equation}
where we set $F_{-1}^\xv\mathcal Q_\mu=0$. We will need the following corollary:

\begin{cor}\label{cor:top-filtration-layer}
There is a natural graded isomorphism
\[
    \operatorname{gr}_{\xv}\mathcal Q_\mu
       \cong V_\mu^{\mathrm{top}}.
\]
Moreover,
\begin{equation}\label{eq:top-filtered-hilbert}
    \sum_{d\ge0}
       \dim_\C\bigl(
          \operatorname{gr}_d^\xv\mathcal Q_\mu
       \bigr)q^d
    =
    \sum_{\sigma\in\CST_\mu}
       q^{|\invvec(\sigma)|}.
\end{equation}
Consequently,
\[
    \dim_\C F_d^\xv\mathcal Q_\mu
      =
      \#\{\sigma\in\CST_\mu:
                    |\invvec(\sigma)|\le d\},
\]
and
\[
    \dim_\C\mathcal Q_\mu
      =
      |\CST_\mu|
      =
      \binom{n}{\mu'}=\frac{n!}{\prod_{c=0}^{\mu_1-1}h_c!}.
\]
\end{cor}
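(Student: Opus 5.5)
The plan is to read off the corollary from Theorem~\ref{thm:bigr} by identifying $\operatorname{gr}_\xv\mathcal Q_\mu$ with the top $\yv$-degree slice of $\mathrm{gr}_\xv\mathrm{gr}_\yv\mathcal R_\mu$. The $\yv$-associated graded $\mathrm{gr}_\yv\mathcal R_\mu=\bigoplus_b F^\yv_b/F^\yv_{b-1}$ has degree-$N$ piece exactly $\mathcal Q_\mu$. The $\xv$-filtration induced on this piece by $F^\xv_\bullet\mathcal R_\mu$ is precisely the filtration \eqref{eq:top-x-filtration}: elements of $F^\xv_d\mathcal R_\mu\cap F^\yv_N\mathcal R_\mu$, taken modulo $F^\yv_{N-1}\mathcal R_\mu$. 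Hence the $\yv$-degree-$N$ part of the iterated associated graded $\mathrm{gr}_\xv\mathrm{gr}_\yv\mathcal R_\mu$ is $\operatorname{gr}_\xv\mathcal Q_\mu$, with its $\xv$-grading intact.

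The first step is to check that this iterated construction is the one meant in Theorem~\ref{thm:bigr}. The $\xv$-filtration there is the degree filtration on $\mathrm{gr}_\yv\mathcal R_\mu$, and since $\mathrm{gr}_\yv\mathcal R_\mu$ is a quotient of $R$ that is graded in $\yv$, its $\xv$-degree filtration restricted to $\yv$-degree $N$ agrees with \eqref{eq:top-x-filtration}. I expect this bookkeeping to be the only real obstacle. If the identification is not immediate, I would instead compare with $\mathrm{bigr}\mathcal R_\mu$ at $b=N$: there is a natural surjection
\[
F^\xv_d\mathcal R_\mu\cap F^\yv_N\mathcal R_\mu\longrightarrow \operatorname{gr}^\xv_d\mathcal Q_\mu,
\]
whose kernel contains both $F^\xv_{d-1}\cap F^\yv_N$ and $F^\xv_d\cap F^\yv_{N-1}$. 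This gives a natural surjection from the $(d,N)$ piece of $\mathrm{bigr}\mathcal R_\mu$ onto $\operatorname{gr}^\xv_d\mathcal Q_\mu$. The isomorphisms of Theorem~\ref{thm:bigr}, which in particular give equal dimensions for all of these, would then upgrade it to an isomorphism. Composing with $\mathrm{bigr}\mathcal R_\mu\cong V_\mu$ in bidegree $(d,N)$ yields $\operatorname{gr}_\xv\mathcal Q_\mu\cong\bigoplus_d V_\mu^{(d,N)}=V_\mu^{\tp}$.

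For the Hilbert series \eqref{eq:top-filtered-hilbert}, I would use the background facts: $\Frob_{q,t}V_\mu=\Ht_\mu$ by the $n!$ theorem, and the bigraded Hilbert series is the $m_{1^n}$-coefficient (equivalently, the coefficient of $x_1\cdots x_n$). Extracting $[t^{N}]$ and applying the HHL formula, only standard fillings with $\maj=n(\mu)$ survive. These are exactly the column-strict fillings $\CST_\mu$, since a standard filling attains $\maj=n(\mu)$ iff every non-bottom box is a descent. The $q$-weight on $\CST_\mu$ is $|\invvec(\sigma)|$, since the top-degree inversion triples recover $\inv$.

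The remaining statements are then formal. For a filtered finite-dimensional space, $\dim F_d=\sum_{e\le d}\dim\operatorname{gr}_e$, which gives the count of $\sigma\in\CST_\mu$ with $|\invvec(\sigma)|\le d$. Setting $q=1$ gives $\dim\mathcal Q_\mu=|\CST_\mu|$. Finally, a column-strict standard filling amounts to choosing which $h_c$ entries go in column $c$, each column then being forced into increasing order from bottom to top. This gives the multinomial coefficient $n!/\prod_c h_c!$.
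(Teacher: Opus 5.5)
Your fallback argument is the paper's proof. You identify $\operatorname{gr}^{\xv}_d\mathcal Q_\mu$ with the $(d,N)$ piece of $\mathrm{bigr}\,\mathcal R_\mu$, apply Theorem~\ref{thm:bigr}, use Haiman's theorem together with the HHL formula for the Hilbert series, and finish with the same formal counts.

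\textbf{The dimension upgrade can be dropped.} Write $A=F^{\xv}_d\mathcal R_\mu\cap F^{\yv}_N\mathcal R_\mu$ and $B=F^{\xv}_{d-1}\mathcal R_\mu\cap F^{\yv}_N\mathcal R_\mu\subseteq A$. The kernel of your surjection is $A\cap(B+F^{\yv}_{N-1}\mathcal R_\mu)$. By the modular law this equals $B+(F^{\xv}_d\mathcal R_\mu\cap F^{\yv}_{N-1}\mathcal R_\mu)$, so the map is already an isomorphism; this is what the paper means by ``by the definition of the induced filtration.'' As you phrased it, the upgrade would also need one more input: that $\dim\mathcal Q_\mu$ equals the total dimension of the $\yv$-degree-$N$ part of $\mathrm{bigr}\,\mathcal R_\mu$. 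Theorem~\ref{thm:bigr} says nothing directly about $\operatorname{gr}^{\xv}_d\mathcal Q_\mu$ with the filtration \eqref{eq:top-x-filtration}.

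\textbf{Your primary route is not mere bookkeeping.} The iterated $\xv$-filtration on the $\yv$-degree-$N$ slice of $\mathrm{gr}_{\yv}\mathcal R_\mu$ is the image of $F^{\xv}_dR\cap F^{\yv}_NR$. A priori this is only contained in \eqref{eq:top-x-filtration}: a class may have one representative of low $\xv$-degree and another of low $\yv$-degree, but none with both. The two filtrations agree only as a consequence of Theorem~\ref{thm:bigr}, not because of the gradings themselves.
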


\begin{proof}
By the definition of the induced filtration,
\[
 \operatorname{gr}_d^\xv\mathcal Q_\mu
 \cong
 \frac{F_d^\xv\mathcal{R}_\mu\cap F_N^\yv\mathcal{R}_\mu}
 {(F_{d-1}^\xv\mathcal{R}_\mu\cap F_N^\yv\mathcal{R}_\mu)
 +(F_d^\xv\mathcal{R}_\mu\cap F_{N-1}^\yv\mathcal{R}_\mu)}.
\]
This is the bidegree $(d,N)$ component of
$\mathrm{bigr}\,\mathcal{R}_\mu$, so Theorem~\ref{thm:bigr}
identifies it with $V_\mu^{(d,N)}$.
Haiman's graded-character theorem~\cite[Theorem~3.2]{haiman2001hilbert}
and the Haglund--Haiman--Loehr formula~\cite[Theorem~2.2]{haglund2005combinatorial}
give
\[
 \Hilb_q(V_\mu^{\mathrm{top}})
   =[m_{1^n}]\bigl([t^N]\Ht_\mu(X;q,t)\bigr)
   =\sum_{\sigma\in\CST_\mu}q^{|\invvec(\sigma)|},
\]
where $[m_{1^n}]$ denotes the $1^n$-coefficient in the monomial
symmetric function expansion. Summing the graded dimensions gives the filtration dimensions and the total dimension. Finally, assigning the labels to columns of sizes $h_c$ and then ordering each column increasingly gives $|\CST_\mu|=n!/\prod_c h_c!$.
\end{proof}

Finally, we record a notion of ``compatibility" of bases with filtrations that we will need. 

\begin{defn}
\label{def:filtered-basis}
Let $W$ be a finite-dimensional vector space with an
increasing exhaustive filtration
\[
    0=F_{-1}W\subseteq F_0W\subseteq F_1W
      \subseteq\cdots\subseteq W.
\]
A basis $\{b_i\}_{i\in I}$, with assigned degrees
$d_i\ge0$, is \emph{adapted to the filtration},
or a \emph{filtered basis}, if for every $d$ the set
\[
    \{b_i:d_i\le d\}
\]
is a basis of $F_dW$.
\end{defn}

In particular, if $v\in F_dW$ and
\[
    v=\sum_{i\in I}a_i b_i,
\]
then $a_i=0$ whenever $d_i>d$.
To see this, note the filtered-basis property gives an expansion
using only the $b_i$ with $d_i\le d$, and uniqueness of the expansion in the full basis completes the proof.

Another corollary is that, for every $d$, the set
\[
    \{b_i+F_{d-1}W:d_i=d\}
\]
forms a basis of $F_dW/F_{d-1}W$.
Therefore a filtered basis gives a homogeneous basis of the associated graded vector space.

For a polynomial $P$, let $\operatorname{in}_{\xv}(P)$
denote its highest $\xv$-homogeneous component.
For $v\in F_d^\xv\mathcal Q_\mu$, its degree-$d$ class is
\[
    [v]
       :=v+F_{d-1}^\xv\mathcal Q_\mu.
\]
Under the natural identification
$\operatorname{gr}_{\xv}\mathcal Q_\mu\cong V_\mu^\mathrm{top}$,
the degree-$d$ class of a polynomial homogeneous of
$\yv$-degree $N$ is given by its
$\xv$-homogeneous component of degree $d$.

\subsection{Column antisymmetrization}

We now define a family of functionals that detect the $\yv$-component of a monomial.

\begin{defn}
    Let $\sigma:D_\mu\to[n]$ be a bijective filling (not necessarily column-strict) of $\mu$. If $(c_\sigma(i),h_\sigma(i))$ denotes the zero-based coordinates of the box labeled $i$, we define
    $$ \mathbf{p}_\sigma := ( (c_\sigma(1),h_\sigma(1)),\dots,(c_\sigma(n),h_\sigma(n))) \in (\C^2)^n$$
    These points identify the bijective fillings with the elements of $\mathcal O_\mu$. For a function $f \in \mathcal{R}_\mu$, we define
    $$ f(\mathbf{p}_\sigma) = f(c_\sigma(1),\dots,c_\sigma(n);h_\sigma(1),\dots,h_\sigma(n))$$
    to be the function evaluated at the tuple $\mathbf{p}_\sigma$.
\end{defn}

\begin{example}
    For the following filling $\sigma$ of shape $(2,2)$, we have
    $$
    \sigma = 
    \begin{ytableau}
        3 & 4 \\
        1 & 2
    \end{ytableau},
    \qquad
    \mathbf{p}_\sigma = ( (0,0),(1,0),(0,1),(1,1))
    $$
    Evaluation of a function $f \in \mathcal{R}_{(2,2)}$ would then denote evaluation at
    $$ (x_1,x_2,x_3,x_4) = (0,1,0,1), \qquad (y_1,y_2,y_3,y_4) = (0,0,1,1)$$
    For instance,
    $$ (y_3y_4)(\mathbf{p}_\sigma) = 1 \cdot 1 = 1$$
\end{example}

Fix $\tau\in\CST_\mu$.  Two bijective fillings are \emph{column-equivalent}
if each geometric column contains the same set of labels in both fillings.
Let $[\tau]_{\mathrm{col}}$ be the column-equivalence class of $\tau$.  Every
$\rho\in[\tau]_{\mathrm{col}}$ is obtained by independently permuting the
labels in each column of $\tau$.  Let $\epsilon_\tau(\rho)$ be the product of
the signs of these column permutations, and define the column-alternating
functional
\[
    \mathcal A_\tau(f)
      :=\sum_{\rho\in[\tau]_{\mathrm{col}}}
             \epsilon_\tau(\rho)f(\mathbf{p}_\rho),
    \qquad f\in\mathcal{R}_\mu.
\]

\begin{example}
    Let $\mu=(2,2)$ and consider the filling
\[
    \sigma=
    \begin{ytableau}
        3&4\\
        1&2
    \end{ytableau}.
\]
Its column-equivalence class consists of the four fillings
\[
    \sigma=
    \begin{ytableau}
        3&4\\
        1&2
    \end{ytableau},
    \qquad
    \sigma_L=
    \begin{ytableau}
        1&4\\
        3&2
    \end{ytableau},
    \qquad
    \sigma_R=
    \begin{ytableau}
        3&2\\
        1&4
    \end{ytableau},
    \qquad
    \sigma_{LR}=
    \begin{ytableau}
        1&2\\
        3&4
    \end{ytableau}.
\]
Here $L$ and $R$ indicate swapping the entries in the left
and right columns, respectively. Their signs are
$+1,-1,-1,+1$, so
\[
    \mathcal A_\sigma(f)
       =f(\mathbf{p}_\sigma)-f(\mathbf{p}_{\sigma_L})
        -f(\mathbf{p}_{\sigma_R})+f(\mathbf{p}_{\sigma_{LR}}).
\]
For instance, we have the following evaluations
$$\mathcal A_\sigma(y_3y_4)=1-0-0+0=1, \qquad  \mathcal A_\sigma(y_3)=1-0-1+0=0.$$
\end{example}

For a column $c$, let its height be $h_c$, and write its
labels in $\tau$ from bottom to top as
\[
    i_{c,0}<i_{c,1}<\cdots<i_{c,h_c-1},
\]
where the inequalities follow from column strictness.

\begin{lem}[column factorization]\label{lem:column-alt-factorization}
For exponent vectors $\eta,\beta\in\Z_{\ge0}^n$,
\begin{equation}\label{eq:column-alt-factorization}
 \mathcal A_\tau(\xv^\eta \yv^\beta)
 =(\xv^\eta)(\mathbf{p}_\tau)\mathcal{A}_\tau(\yv^\beta) = \left(\prod_{a=1}^n c_\tau(a)^{\eta_a}\right)
   \prod_{c=0}^{\mu_1-1}
   \det\!\left(r^{\beta_{i_{c,s}}}\right)_{
          0\le s,r<h_c},
\end{equation}
where $0^0=1$ and $(\xv^\eta)(\mathbf{p}_\tau)$ denotes evaluation at $\mathbf{p}_\tau$.
In particular, $\mathcal A_\tau$ annihilates every function represented by a
polynomial of $\yv$-degree strictly smaller than $N$ and therefore descends to a
linear functional on $\mathcal Q_\mu$.
\end{lem}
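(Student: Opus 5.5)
The proof is a direct computation on the definition of $\mathcal A_\tau$, followed by a degree argument.

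\emph{Step 1: the $\xv$-factor is constant on the class.} Every $\rho\in[\tau]_{\mathrm{col}}$ is obtained from $\tau$ by permuting labels only within columns. So each label $a$ stays in the same geometric column: $c_\rho(a)=c_\tau(a)$ for all $a$. Hence
\[
(\xv^\eta\yv^\beta)(\mathbf p_\rho)=(\xv^\eta)(\mathbf p_\tau)\,(\yv^\beta)(\mathbf p_\rho).
\]
Pulling the constant factor $(\xv^\eta)(\mathbf p_\tau)=\prod_a c_\tau(a)^{\eta_a}$ out of the signed sum gives the first equality.

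\emph{Step 2: the $\yv$-factor splits into determinants.} The class $[\tau]_{\mathrm{col}}$ is in bijection with $\prod_c \symm_{h_c}$, with $\epsilon_\tau$ the product of the column signs. Write $\pi_c\in\symm_{h_c}$ for the permutation sending position $s$ to the height $\pi_c(s)$ at which $i_{c,s}$ sits in $\rho$. Then
\[
(\yv^\beta)(\mathbf p_\rho)=\prod_c\prod_s \pi_c(s)^{\beta_{i_{c,s}}},
\]
with the convention $0^0=1$ built into evaluation. The signed sum over the product group therefore factors as a product over columns of $\sum_{\pi\in\symm_{h_c}}\mathrm{sgn}(\pi)\prod_s \pi(s)^{\beta_{i_{c,s}}}$. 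By the Leibniz formula this is $\det\bigl(r^{\beta_{i_{c,s}}}\bigr)_{0\le s,r<h_c}$. The only care needed is the sign convention: replacing $\pi$ by $\pi^{-1}$ does not change the sign, so rows and columns may be read either way.

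\emph{Step 3: vanishing in low $\yv$-degree.} Fix a column $c$. Consider its $h_c$ rows, indexed by the exponents $\beta_{i_{c,s}}$. If two of these exponents coincide, the determinant has two equal rows and vanishes. Otherwise the exponents are $h_c$ distinct nonnegative integers, so their sum is at least $0+1+\cdots+(h_c-1)=\binom{h_c}{2}$.

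Now take a monomial $\xv^\eta\yv^\beta$ with $\sum_a\beta_a<N=\sum_c\binom{h_c}{2}$. Splitting $\sum_a\beta_a$ by columns of $\tau$, some column has exponent sum strictly less than $\binom{h_c}{2}$. That column's exponents cannot be distinct, so its determinant vanishes, and hence $\mathcal A_\tau(\xv^\eta\yv^\beta)=0$. By linearity, $\mathcal A_\tau$ kills every polynomial of $\yv$-degree less than $N$.

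\emph{Step 4: descent to $\mathcal Q_\mu$.} $\mathcal A_\tau$ is defined through point evaluations on $\mathcal O_\mu$, so it is well defined on $\mathcal R_\mu=R/I(\mathcal O_\mu)$. It vanishes on $F^\yv_{N-1}\mathcal R_\mu$, which is the image of polynomials of $\yv$-degree at most $N-1$. Therefore it descends to a functional on $F^\yv_N\mathcal R_\mu/F^\yv_{N-1}\mathcal R_\mu=\mathcal Q_\mu$.

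No step is a real obstacle. The main bookkeeping point is matching the indexing conventions (rows by $s$, columns by $r$, and $0^0=1$) in Step 2.
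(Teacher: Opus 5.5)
Your proposal is correct and follows the paper's proof essentially step for step. You factor out the $\xv$-part because column-equivalent fillings keep each label in its column, you identify each column's signed sum with a determinant via the Leibniz formula, and you use the observation that a nonvanishing column determinant forces distinct exponents whose sum is at least $\binom{h_c}{2}$. Your remarks on the sign convention and on $\mathcal A_\tau$ being well defined on $\mathcal R_\mu$ (as a combination of point evaluations on $\mathcal O_\mu$) only make explicit what the paper leaves implicit.
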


\begin{proof}
Within a column-equivalence class, the $\xv$-coordinate of each label is fixed,
so the $\xv$-factor may be factored out.  The signed sum over permutations in a
single column is the determinant in
Equation~\eqref{eq:column-alt-factorization}; since the choices in distinct columns
are independent, the sum can be written as the above product.

If one of these determinants is nonzero, then the exponents assigned to that
column are pairwise distinct.  This implies their sum is at least
$0+1+\cdots+(h_c-1)=\binom{h_c}{2}$.  If every column determinant were
nonzero, then $|\beta|\ge\sum_c\binom{h_c}{2}=N$.  Therefore every monomial of
$y$-degree less than $N$ is annihilated, and linearity proves the last
assertion.
\end{proof}

\begin{example}
    Consider the column-strict filling
\[
    \tau=
    \begin{ytableau}
        5 & \none\\
        3 & 4\\
        1 & 2
    \end{ytableau}, \qquad \mathbf{p}_\tau = ( (0,0),(1,0),(0,1),(1,1),(0,2))
\]
so that the columns (written in increasing order) are given by
$$ (i_{0,0},i_{0,1},i_{0,2}) = (1,3,5), \qquad (i_{1,0},i_{1,1}) = (2,4)$$
We calculate, as an example, $\mathcal{A}_\tau(x_2^2x_4y_1y_4y_5^2)$ using the determinant formula. We have that
$$ \eta = (0,2,0,1,0), \qquad \beta = (1,0,0,1,2)$$
so that the factors $\beta_{i_{c,s}}$ are given by
$$ (\beta_1,\beta_3,\beta_5) = (1,0,2), \qquad (\beta_2,\beta_4) = (0,1)$$
The $\xv$-factor has the constant value $(x_2^2x_4)(\mathbf{p}_\tau) = 1^2 \cdot 1 = 1$ since both $2,4$ are in the column labeled $1$ in $\tau$. The determinant formula then gives
\begin{align*}
\mathcal{A}_\tau(x_2^2x_4y_1y_4y_5^2) &= (1^2\cdot1) \det\begin{pmatrix}
        0^{\beta_1}&1^{\beta_1}&2^{\beta_1}\\
        0^{\beta_3}&1^{\beta_3}&2^{\beta_3}\\
        0^{\beta_5}&1^{\beta_5}&2^{\beta_5}
    \end{pmatrix}
    \det
    \begin{pmatrix}
        0^{\beta_2}&1^{\beta_2}\\
        0^{\beta_4}&1^{\beta_4}
    \end{pmatrix} \\
    &= \det\begin{pmatrix}
        0^1&1^1&2^1\\
        0^0&1^0&2^0\\
        0^2&1^2&2^2
    \end{pmatrix}
    \det\begin{pmatrix}
        0^0&1^0\\
        0^1&1^1
    \end{pmatrix} = -2
\end{align*}
where $0^0 = 1$.
\end{example}

An important fact is that, on monomials of total $\yv$-degree $N$, the functional $\mathcal A_\tau$ is supported on $\yv$-words that are weakly greater than $\rowvec(\tau)$ in lexicographical order. This fact will play a key role in the triangularity argument. We prove the following lemma:

\begin{lem}[row-word triangularity]\label{lem:row-word-triangularity}
Suppose $|\beta|=N$ and
$\mathcal A_\tau(\xv^\eta \yv^\beta)\ne0$.  Then
\[
    \beta\ge_{\mathrm{lex}}\rowvec(\tau).
\]
If equality holds, the determinant product in
Equation~\eqref{eq:column-alt-factorization} is the fixed nonzero constant
$$\Upsilon_\mu := \prod_{c = 0}^{\mu_1-1}\prod_{r = 0}^{h_c-1} r!$$
which depends only on $\mu$. Here, $h_c$ is the height of column $c$.
\end{lem}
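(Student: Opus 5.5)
The plan is to read everything off the column factorization in Lemma~\ref{lem:column-alt-factorization}. There, $\mathcal A_\tau(\xv^\eta\yv^\beta)$ is an $\xv$-evaluation times $\prod_c \det\bigl(r^{\beta_{i_{c,s}}}\bigr)_{0\le s,r<h_c}$. So nonvanishing forces every column determinant to be nonzero. I will first pin down exactly which $\beta$ of total degree $N$ survive, and then compare such $\beta$ with $\rowvec(\tau)$ lexicographically.

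\emph{Step 1 (rigidity of surviving exponents).} As in the proof of Lemma~\ref{lem:column-alt-factorization}, a nonzero determinant in column $c$ forces the exponents $\beta_{i_{c,0}},\dots,\beta_{i_{c,h_c-1}}$ to be pairwise distinct nonnegative integers. Hence their sum is at least $\binom{h_c}{2}$, with equality exactly when they form a permutation of $\{0,1,\dots,h_c-1\}$. Summing over columns gives $|\beta|\ge\sum_c\binom{h_c}{2}=N$. The hypothesis $|\beta|=N$ forces equality in every column. Thus for each $c$ the map $s\mapsto\beta_{i_{c,s}}$ is a bijection onto $\{0,\dots,h_c-1\}$.

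\emph{Step 2 (lex comparison).} Column strictness means that the label $i_{c,s}$ sits at height $s$ in $\tau$, so $\rowvec(\tau)_{i_{c,s}}=s$. Suppose $\beta\neq\rowvec(\tau)$, and let $a$ be the smallest index with $\beta_a\ne\rowvec(\tau)_a$. Write $a=i_{c,s}$.
\begin{itemize}
\item The labels $i_{c,0}<\dots<i_{c,s-1}$ are all smaller than $a$, so $\beta$ agrees with $\rowvec(\tau)$ on them. That is, they already carry the values $0,1,\dots,s-1$.
\item By the bijectivity from Step 1, $\beta_a\in\{s,\dots,h_c-1\}$.
\item Since $\beta_a\ne s$, we get $\beta_a>s=\rowvec(\tau)_a$.
\end{itemize}
Hence $\beta>_{\mathrm{lex}}\rowvec(\tau)$, which proves the inequality.

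\emph{Step 3 (the diagonal constant).} If $\beta=\rowvec(\tau)$, the column-$c$ matrix is $\bigl(r^{s}\bigr)_{0\le s,r<h_c}$, with the convention $0^0=1$. This is the Vandermonde matrix in the nodes $0,1,\dots,h_c-1$. Its determinant is $\prod_{0\le r<r'<h_c}(r'-r)=\prod_{r'=0}^{h_c-1}r'!$, which is positive. Taking the product over columns gives $\Upsilon_\mu$, which depends only on the column heights of $\mu$.

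The only point needing care is Step 2. One must use that lex order runs over the labels $1,\dots,n$, which interleave across columns. The argument handles this by looking only at the first discrepancy and using that all smaller labels in the same column are already matched. I also need to confirm the orientation convention in Step 3 (rows indexed by $s$, columns by $r$), so that no sign appears and the determinant is exactly the Vandermonde product.
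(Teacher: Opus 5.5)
Your proposal is correct and follows essentially the same route as the paper: nonvanishing in total degree $N$ forces each column's exponents to be exactly $\{0,\dots,h_c-1\}$, the first-discrepancy argument within the column of $a=i_{c,s}$ gives $\beta_a>s$, and the diagonal case is the Vandermonde determinant in the nodes $0,\dots,h_c-1$. The orientation check you flag in Step 3 comes out as you expect: $\det(r^s)_{s,r}$ is the transpose of the standard Vandermonde matrix, so it equals $\prod_{r<r'}(r'-r)$ with no sign.
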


\begin{proof}
Nonvanishing and total degree $N$ force the exponents in every column to be
exactly the set $\{0,1,\ldots,h_c-1\}$.  Put
$\gamma=\rowvec(\tau)$, and suppose $a$ is the first label for which
$\beta_a\ne\gamma_a$.  If $a=i_{c,r}$, then the smaller labels
$i_{c,0},\ldots,i_{c,r-1}$ occur before $a$ and, by the choice of $a$, have
$\beta$-values $0,\ldots,r-1$.  Since the $\beta$-values in this column are
distinct, $\beta_a$ cannot be smaller than $r$.  It is not equal to $r$, so
\[
    \beta_a>r=\gamma_a.
\]
This proves the lexicographic inequality.  When $\beta=\gamma$, for each column $c$  Lemma \ref{lem:column-alt-factorization} implies each column
determinant becomes the Vandermonde
$$ \det(r^s)_{0\leq s,r<h_c} = \det
\begin{pmatrix}
    0^0 & 1^0 & \dots & (h_c-1)^0 \\
    0^1 & 1^1 & \dots & (h_c-1)^1 \\
    \vdots & \vdots & \ddots & \vdots \\
    0^{h_c-1} & 1^{h_c-1} & \dots & (h_c-1)^{h_c-1}
\end{pmatrix} = \prod_{0 \leq s < r < h_c}(r-s) = \prod_{r = 1}^{h_c-1} r!
$$
Multiplying across all columns gives
\[
    \mathcal{A}_\tau(\xv^\eta\yv^\beta) =(\xv^\eta)(\mathbf{p}_\tau)\bigg(\prod_{c = 0}^{\mu_1-1} \prod_{r = 0}^{h_c-1} r!\bigg) = (\xv^\eta)(\mathbf{p}_\tau)\Upsilon_\mu\qedhere
\]
\end{proof}

\begin{example}
    Let
\[
    \sigma=
    \begin{ytableau}
        3&4\\
        1&2
    \end{ytableau},
    \qquad
    \tau=
    \begin{ytableau}
        2&4\\
        1&3
    \end{ytableau},
\]
with row vectors
\[
    \operatorname{rowvec}(\sigma)=(0,0,1,1),
    \qquad
    \operatorname{rowvec}(\tau)=(0,1,0,1),
\]
so that $\operatorname{rowvec}(\sigma)
       <_{\mathrm{lex}}\operatorname{rowvec}(\tau)$. Indeed, we have

\begin{align*}
    \mathcal A_\tau(y_3y_4)
    &=
    \det\begin{pmatrix}
        1&1\\
        1&1
    \end{pmatrix}
    \det\begin{pmatrix}
        0&1\\
        0&1
    \end{pmatrix}
    =0\\
    \mathcal{A}_\sigma(y_3y_4) &=
    \det\begin{pmatrix}
        1&1\\
        0&1
    \end{pmatrix}
    \det\begin{pmatrix}
        1&1\\
        0&1
    \end{pmatrix}
    =1
\end{align*}
\end{example}

\subsection{Haglund insertion algorithm}

We first describe an insertion algorithm that reconstructs a
column-strict filling from its row vector and inversion tuple.

\begin{defn}[Haglund insertion]\label{def:insertion-algorithm}
Let $\alpha\in\mathbb Z_{\ge0}^n$, and let
$\beta\in\mathbb Z_{\ge0}^n$ have content
\[
    0^{\mu_1}1^{\mu_2}\cdots
    (\ell(\mu)-1)^{\mu_{\ell(\mu)}}.
\]
We set \[
    B_j(\beta)=\{a:\beta_a=j-1\}.
\]
Start with the empty diagram of shape $\mu$. Process the rows
from bottom to top and, within row $j$, process the labels of
$B_j(\beta)$ in increasing order.

\Needspace{7\baselineskip}
When processing a label $a\in B_j(\beta)$, a cell in row $j$
is \emph{eligible} if it is unoccupied and either:
\begin{enumerate}
    \item $j=1$; or
    \item $j>1$ and the entry directly below it is smaller than $a$.
\end{enumerate}
List the eligible columns in increasing order as
\[
    e_0<e_1<\cdots<e_{d-1}.
\]
If $\alpha_a<d$, place $a$ in column $e_{\alpha_a}$.
If $\alpha_a\ge d$, the algorithm fails and stops.

If every label is inserted, return the resulting filling.
\end{defn}

The integer $\alpha_a$ is the number of eligible cells skipped
when inserting $a$, not the column number of its chosen cell.
All column numbers and eligible-list indices begin at $0$.

For a fixed $\sigma\in\CST_\mu$, reconstruct $\sigma$ in the
same row-by-row order, placing each label in its prescribed
cell. Let
\[
    E_a^\sigma
       =\{e_{a,0}^\sigma<\cdots<
          e_{a,|E_a^\sigma|-1}^\sigma\}
\]
be the eligible-column list when $a$ is processed. Define
$\operatorname{code}_\sigma(a)$ to be the unique index satisfying
\[
    c_\sigma(a)
       =e_{a,\operatorname{code}_\sigma(a)}^\sigma.
\]
The prescribed cell is eligible because $\sigma$ is
column-strict.

\begin{example}\label{ex: haglund-insertion-54}
    Let $\mu = (5,4) \vdash 9$, and consider the following $\sigma \in \CST_{(5,4)}$:

    \[
    \sigma=
    \begin{ytableau}
        9 & 8 & 4 & 3 & \none\\
        5 & 6 & 2 & 1 & 7\\
        \none[\scriptstyle 0] &
        \none[\scriptstyle 1] &
        \none[\scriptstyle 2] &
        \none[\scriptstyle 3] &
        \none[\scriptstyle 4]
    \end{ytableau},
    \]
    where the labels below denote the column number. We will perform the insertion with the following vectors to recover $\sigma$:
    $$\alpha = (3,2,1,0,0,0,0,1,0), \qquad \beta = (0,0,1,1,0,0,0,1,1) $$
    We perform the insertion, with cyan denoting skipped eligible cells, and pink denoting the rest of the eligible cells:
    \par
\begingroup
\small
\setlength{\tabcolsep}{5pt}
\renewcommand{\arraystretch}{1.3}
\setlength{\LTpre}{6pt}
\setlength{\LTpost}{6pt}
\setlength{\LTleft}{\fill}
\setlength{\LTright}{\fill}
\begin{longtable}{@{}cccccccc@{}}
\toprule
Step $t$
& Label $a_t$
& Row $j$
& $E_{a_t}^\sigma$
& $|E_{a_t}^\sigma|$
& $\alpha_{a_t}$
& \shortstack{Chosen column\\
    $e_{a_t,\alpha_{a_t}}^\sigma$}
& Partial filling
\\
\midrule
\endfirsthead
\multicolumn{8}{@{}l}{\itshape Haglund insertion for $\mu=(5,4)$ (continued)}\\[3pt]
\toprule
Step $t$
& Label $a_t$
& Row $j$
& $E_{a_t}^\sigma$
& $|E_{a_t}^\sigma|$
& $\alpha_{a_t}$
& \shortstack{Chosen column\\
    $e_{a_t,\alpha_{a_t}}^\sigma$}
& Partial filling
\\
\midrule
\endhead
\multicolumn{8}{r@{}}{\footnotesize\itshape Continued on the next page}\\
\endfoot
\bottomrule
\endlastfoot
1&1&1&$\{0,1,2,3,4\}$&5&3&3&
\begin{ytableau}
    \blankcell & \blankcell & \blankcell & \blankcell \\
    *(cyan) & *(cyan) & *(cyan) & *(pink)1 & *(pink)
\end{ytableau}
\\
\midrule
2&2&1&$\{0,1,2,4\}$&4&2&2&
\begin{ytableau}
    \blankcell & \blankcell & \blankcell & \blankcell \\
    *(cyan) & *(cyan) & *(pink)2 & 1 & *(pink)
\end{ytableau}
\\
\midrule
3&5&1&$\{0,1,4\}$&3&0&0&
\begin{ytableau}
    \blankcell & \blankcell & \blankcell & \blankcell \\
    *(pink)5 & *(pink) & 2 & 1 & *(pink)
\end{ytableau}
\\
\midrule
4&6&1&$\{1,4\}$&2&0&1&
\begin{ytableau}
    \blankcell & \blankcell & \blankcell & \blankcell \\
    5 & *(pink)6 & 2 & 1 & *(pink)
\end{ytableau}
\\
\midrule
5&7&1&$\{4\}$&1&0&4&
\begin{ytableau}
    \blankcell & \blankcell & \blankcell & \blankcell \\
    5 & 6 & 2 & 1 & *(pink)7
\end{ytableau}
\\
\midrule
6&3&2&$\{2,3\}$&2&1&3&
\begin{ytableau}
    \blankcell & \blankcell & *(cyan) & *(pink)3 \\
    5 & 6 & 2 & 1 & 7
\end{ytableau}
\\
\midrule
7&4&2&$\{2\}$&1&0&2&
\begin{ytableau}
    \blankcell & \blankcell & *(pink)4 & 3 \\
    5 & 6 & 2 & 1 & 7
\end{ytableau}
\\
\midrule
8&8&2&$\{0,1\}$&2&1&1&
\begin{ytableau}
    *(cyan) & *(pink)8 & 4 & 3 \\
    5 & 6 & 2 & 1 & 7
\end{ytableau}\\
\midrule
9&9&2&$\{0\}$&1&0&0&
\begin{ytableau}
    *(pink)9 & 8 & 4 & 3 \\
    5 & 6 & 2 & 1 & 7
\end{ytableau}
\\
\end{longtable}
\endgroup
The algorithm returns $\sigma$. Its code, written in
insertion order, is
\[
    (3,2,0,0,0,1,0,1,0),
\]
whereas the same code, written in label order, is
\[
    \alpha^\sigma
       =\operatorname{invvec}(\sigma)
       =(3,2,1,0,0,0,0,1,0).
\]
The nonzero entries can also be read directly from the filling:
\begin{center}
\begin{tabular}{@{}clc@{}}
\toprule
Label $a$
& Inversions attributed to $a$
& $\alpha_a^\sigma$\\
\midrule
1
& $(-\infty,1,5),(-\infty,1,6),(-\infty,1,2)$
& 3\\
2
& $(-\infty,2,5),(-\infty,2,6)$
& 2\\
3
& $(2,3,4)$
& 1\\
8
& $(5,8,9)$
& 1\\
\bottomrule
\end{tabular}
\end{center}
All other labels contribute no inversions.
\end{example}

\Needspace{9\baselineskip}
\subsection{Insertion codes and Newton polynomials}

As before, let $\beta$ be a row word and write
\[
    B_j(\beta)=\{a:\beta_a=j-1\}.
\]
If $\sigma\in\CST_\mu$, perform Haglund insertion as in Definition \ref{def:insertion-algorithm}; at the moment when $a$ is inserted,
let
\[
    E_a^\sigma=
      \{e_{a,0}^\sigma<e_{a,1}^\sigma<\cdots<
                     e_{a,|E_a^\sigma|-1}^\sigma\}
\]
be the eligible columns. Let $\operatorname{code}_\sigma(a)$ be the index of the column occupied
by $a$ in this list.

\begin{lem}
\label{lem:code-equals-invvec}
For every $\sigma\in\CST_\mu$ and every label $a$,
\[
    \operatorname{code}_\sigma(a)=\invvec(\sigma)_a.
\]
Consequently the insertion algorithm applied to
$(\invvec(\sigma),\rowvec(\sigma))$ reconstructs $\sigma$.
\end{lem}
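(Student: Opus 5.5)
We compare, for a fixed label $a$, the eligible cells skipped when $a$ is inserted with the inversion triples whose middle entry is $a$. Write $j-1=\rowht_\sigma(a)$, so $a$ sits in row $j$ at column $c_\sigma(a)$. By definition, $\operatorname{code}_\sigma(a)$ is the number of eligible columns $e<c_\sigma(a)$ at the moment $a$ is processed. We will set up a bijection between these skipped columns and the triples $(x,a,z)\in\Inv(\sigma)$, where $z$ lies in row $j$ strictly to the left of $a$. The pattern in Definition~\ref{def:top-invtriples} places $c$ to the left of $b$ in the same row, with $a$ directly below $c$, so the bijection should send a skipped column $e$ to the triple whose top-left entry is $z=\sigma(e,j-1)$.

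First, describe the state when $a$ is processed. Rows $1,\dots,j-1$ are completely filled. In row $j$, exactly the labels of $B_j(\beta)$ smaller than $a$ have been placed, and each sits in its final cell of $\sigma$. Hence a column $e<c_\sigma(a)$ with $(e,j-1)\in D_\mu$ is eligible exactly when two conditions hold:
\begin{enumerate}
\item the final entry $z=\sigma(e,j-1)$ satisfies $z>a$, which says the cell is still unoccupied;
\item either $j=1$, or the entry $x=\sigma(e,j-2)$ directly below satisfies $x<a$.
\end{enumerate}

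Second, match this with the inversion triples. A triple with middle entry $a$ is a cell $(e,j-1)$ to the left of $a$ in the same row, with entry $z$, such that $z>a>x$; here $x$ is the entry below $z$, and $x=-\infty$ when $j=1$. This is exactly the eligibility condition above, so the skipped eligible columns biject with the triples $(x,a,z)$. Hence $\operatorname{code}_\sigma(a)=d_a(\sigma)=\invvec(\sigma)_a$.

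One boundary point needs care. Every cell $(e,j-1)$ with $e<c_\sigma(a)$ lies in $D_\mu$, since rows are left-justified. Its lower neighbour also exists, because column heights weakly decrease and $(c_\sigma(a),j-2)$ exists. So no cells are lost or gained in the bijection.

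Finally, reconstruction follows by induction along the processing order. Suppose all labels before $a$ have been placed where $\sigma$ places them. The eligible list computed by the algorithm on input $(\invvec(\sigma),\rowvec(\sigma))$ is then exactly $E_a^\sigma$. Since $\invvec(\sigma)_a=\operatorname{code}_\sigma(a)<|E_a^\sigma|$, the algorithm does not fail and places $a$ in column $e^\sigma_{a,\operatorname{code}_\sigma(a)}=c_\sigma(a)$. So it returns $\sigma$.

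The main obstacle is the second step: checking that the orientation convention in Definition~\ref{def:top-invtriples} (the larger entry $c$ to the left of $b$, with $a$ below $c$) agrees with skipping columns to the left. The $(5,4)$ worked example confirms this. For instance, label $3$ skips column $2$, which gives the triple $(2,3,4)$.
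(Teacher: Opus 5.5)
Your proposal is correct and follows essentially the same route as the paper. Both proofs match the eligible cells skipped to the left of $a$'s final cell with the inversion triples whose middle entry is $a$: eligibility of such a cell amounts to its eventual entry exceeding $a$ and the entry below it (or $-\infty$ in the bottom row) being smaller than $a$. Your version spells out more carefully the state of the partial filling, the existence of lower neighbours, and the inductive reconstruction that the paper compresses into the word ``Consequently.''
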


\begin{proof}
When $a$ is inserted, each skipped eligible cell to the left of its chosen cell
will later receive a larger entry $c$ in the same row.
In the bottom row each such cell gives the inversion pair
$(-\infty,a,c)$.  In a higher row its lower neighbor is an entry $u<a$, so it
gives the inversion triple $(u,a,c)$.  Conversely, every inversion whose
middle entry is $a$ arises from exactly one such skipped eligible column, so that the number of skipped eligible columns is
$\invvec(\sigma)_a$, as claimed.
\end{proof}

\begin{defn}\label{def: newton-poly}
    For $\sigma\in\CST_\mu$, put $\alpha^\sigma=\invvec(\sigma)$ and define its \emph{Newton polynomial} by
    \begin{equation}\label{eq:newton-polynomial}
        \mathcal N_\sigma(x)
          :=\prod_{a=1}^n
              \prod_{0\le q<\alpha_a^\sigma}
                  \bigl(x_a-e_{a,q}^\sigma\bigr).
    \end{equation}
    Its $\xv$-degree is $|\alpha^\sigma|$, and its highest $\xv$-homogeneous
    component is $\xv^{\alpha^\sigma} = \xv^{\invvec(\sigma)}$.
\end{defn}

\begin{example}
    Let $\mu = (5,4)$ as before, and $\sigma$ be our running example
    \[
    \sigma=
    \begin{ytableau}
        9 & 8 & 4 & 3 & \none\\
        5 & 6 & 2 & 1 & 7
    \end{ytableau},
\]
with inversion vector and row vector
\[
        \alpha^\sigma=(3,2,1,0,0,0,0,1,0),\qquad
        \beta^\sigma=(0,0,1,1,0,0,0,1,1).
\]
Write
\[
    F_a^\sigma(x_a)
       :=\prod_{0\le q<\alpha_a^\sigma}
                   (x_a-e_{a,q}^\sigma),
    \qquad
    \mathcal N_\sigma(x)
       :=\prod_{a=1}^{9}F_a^\sigma(x_a).
\]
The following table keeps track of the columns skipped during the insertion process, as in Example \ref{ex: haglund-insertion-54}:

\begin{center}
\small
\setlength{\tabcolsep}{5pt}
\begin{tabular}{@{}cccl@{}}
\toprule
$a$
& $E_a^\sigma$
& \shortstack{Skipped\\columns}
& $F_a^\sigma(x_a)$\\
\midrule
1
&$\{0,1,2,3,4\}$&$0,1,2$
&$x_1(x_1-1)(x_1-2)$\\
2
&$\{0,1,2,4\}$&$0,1$
&$x_2(x_2-1)$\\
5
&$\{0,1,4\}$&$\varnothing$
&$1$\\
6
&$\{1,4\}$&$\varnothing$
&$1$\\
7
&$\{4\}$&$\varnothing$
&$1$\\
\midrule
3
&$\{2,3\}$&$2$
&$x_3-2$\\
4
&$\{2\}$&$\varnothing$
&$1$\\
8
&$\{0,1\}$&$0$
&$x_8$\\
9
&$\{0\}$&$\varnothing$
&$1$\\
\bottomrule
\end{tabular}
\end{center}
to obtain the Newton polynomial
\[
    \mathcal N_\sigma(x)
      =x_1(x_1-1)(x_1-2)\,
       x_2(x_2-1)\,
       (x_3-2)\,x_8.
\]
Note the highest degree component is $x_1^3x_2^2x_3x_8 = \xv^{\invvec(\sigma)}$.
\end{example}

\begin{rem}
    The name \emph{Newton polynomial} was chosen due to the resemblance to polynomials that arise in multivariate Newton interpolation; see \cite[Equation~(3)]{neidinger2019newton}. In Neidinger's notation, given a multi-index $\lambda$ and a corresponding grid node $\xv(\lambda)$, with the coordinate nodes $x_m(i)$ pairwise distinct for each $m$, the \emph{classic Newton polynomial} is
    $$ q_\lambda(\xv) = \prod_{m = 1}^n \prod_{i = 0}^{\lambda_m-1} (x_m - x_m(i))$$
\end{rem}

Fix a row word $\beta$ and write
\[
    \CST_\mu(\beta)
       :=\{\sigma\in\CST_\mu:\rowvec(\sigma)=\beta\}.
\]
for the collection of column strict fillings with row word $\beta$. We order this set lexicographically by its insertion-code. In other words, sort lexicographically by
$$ (\invvec(\sigma)_{a_1},\dots,\invvec(\sigma)_{a_n})$$
where $a_1,\dots,a_n$ is the order in which the entries were inserted (the $a$ column in Example \ref{ex: haglund-insertion-54}).

\begin{lem}[Newton triangularity]\label{lem:newton-triangularity}
The matrix
\[
    \left(\mathcal N_\sigma(\mathbf{p}_\tau)\right)_{
       \tau,\sigma\in\CST_\mu(\beta)}
\]
is triangular with nonzero diagonal. In particular, the matrix is invertible.
\end{lem}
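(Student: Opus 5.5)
We evaluate $\mathcal N_\sigma$ at $\mathbf p_\tau$ directly. Since $\mathcal N_\sigma$ depends only on $\xv$, we get
\[
    \mathcal N_\sigma(\mathbf p_\tau)=\prod_{a=1}^n\prod_{0\le q<\alpha^\sigma_a}\bigl(c_\tau(a)-e^\sigma_{a,q}\bigr).
\]
This is nonzero exactly when, for every label $a$, the column $c_\tau(a)$ avoids the first $\alpha^\sigma_a$ eligible columns $e^\sigma_{a,0},\dots,e^\sigma_{a,\alpha^\sigma_a-1}$ from the insertion of $\sigma$. The plan is to show that this forces $\operatorname{code}_\tau\ge_{\mathrm{lex}}\operatorname{code}_\sigma$ in the insertion order, with equality only if $\tau=\sigma$. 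Here $\operatorname{code}=\invvec$ by Lemma~\ref{lem:code-equals-invvec}. The matrix is then triangular with respect to the lexicographic order on insertion codes.

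\emph{Diagonal entries.} Take $\tau=\sigma$. For each $a$, the column $c_\sigma(a)=e^\sigma_{a,\alpha^\sigma_a}$ is distinct from the strictly smaller columns $e^\sigma_{a,q}$ with $q<\alpha^\sigma_a$. So every factor is a nonzero integer, and the diagonal entry is nonzero.

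\emph{Off-diagonal entries.} Suppose $\tau\ne\sigma$ both lie in $\CST_\mu(\beta)$. Because $\beta$ is shared, the insertion order $a_1,\dots,a_n$ is the same for both fillings: rows bottom to top, labels increasing within each row. Let $a_k$ be the first label in this order with $c_\tau(a_k)\ne c_\sigma(a_k)$. Then $\sigma$ and $\tau$ agree on all cells filled before step $k$, so the partial fillings coincide. The eligibility condition depends only on occupancy and on the entry directly below, so the eligible lists agree: $E^\tau_{a_k}=E^\sigma_{a_k}$. The same argument applied at each earlier step gives $\operatorname{code}_\tau(a_i)=\operatorname{code}_\sigma(a_i)$ for $i<k$. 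At step $k$ both $c_\tau(a_k)$ and $c_\sigma(a_k)$ lie in the common list, at different indices.

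Now assume $\mathcal N_\sigma(\mathbf p_\tau)\neq0$. Then $c_\tau(a_k)$ is not among $e^\sigma_{a_k,0},\dots,e^\sigma_{a_k,\alpha^\sigma_{a_k}-1}$. Its index is therefore at least $\alpha^\sigma_{a_k}$, and strictly greater since it differs from $c_\sigma(a_k)$. Hence $\operatorname{code}_\tau(a_k)>\operatorname{code}_\sigma(a_k)$, which gives $\operatorname{code}_\tau>_{\mathrm{lex}}\operatorname{code}_\sigma$. Contrapositively, $\mathcal N_\sigma(\mathbf p_\tau)=0$ whenever $\operatorname{code}_\tau<_{\mathrm{lex}}\operatorname{code}_\sigma$, so the matrix is triangular.

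The main subtlety is the off-diagonal step: we must check that the first geometric disagreement is the first code disagreement, and that the eligible lists really coincide up to that point. This is where agreement of the partial fillings and the shared insertion order from the common $\beta$ are used. A related point is that the codes must determine the filling, so that the order is total on $\CST_\mu(\beta)$; this follows from the reconstruction statement in Lemma~\ref{lem:code-equals-invvec}.
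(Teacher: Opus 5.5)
Your proposal is correct and follows essentially the same route as the paper: take the first insertion step where $\sigma$ and $\tau$ differ, note that the eligible lists coincide there, observe that a smaller code for $\tau$ makes $c_\tau(a)$ one of the roots in the $a$-factor of $\mathcal N_\sigma$, and check that the diagonal entries are products of nonzero differences. Your contrapositive phrasing and your remark that the codes determine the filling (so the order is total on $\CST_\mu(\beta)$) only make explicit what the paper leaves implicit.
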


\begin{proof}
Let $a$ be the first insertion step at which $\sigma$ and $\tau$ make
different choices.  Before this step their partial fillings, and hence their
eligible-column lists, coincide.  If
$\operatorname{code}_\tau(a)<\operatorname{code}_\sigma(a)$, then
\[
    c_\tau(a)=e_{a,\operatorname{code}_\tau(a)}^\sigma
\]
is one of the roots appearing in the $a$-factor of
$\mathcal N_\sigma$, so
$\mathcal N_\sigma(\mathbf p_\tau)=0$.  On the diagonal,
\[
    \mathcal N_\sigma(\mathbf{p}_\sigma)
      =\prod_a\prod_{q<\alpha_a^\sigma}
         \bigl(e_{a,\alpha_a^\sigma}^\sigma-e_{a,q}^\sigma\bigr) \neq 0.
\]
This completes the proof.
\end{proof}

\subsection{Proof of the basis theorem}

We now prove the main result of this paper.

For $\sigma\in\CST_\mu$, define
\begin{equation}\label{eq:interpolation-polynomial}
    \mathcal G_\sigma(\xv,\yv)
      :=\mathcal N_\sigma(\xv)\yv^{\rowvec(\sigma)}.
\end{equation}
Consider the square matrix
\[
    \mathcal M_\mu
       :=\left(\mathcal A_\tau(\mathcal G_\sigma)\right)_
             {\tau,\sigma\in\CST_\mu},
\]
where the rows and columns are ordered first by the lexicographic order of their row
words (the $\yv$-exponent) and then, within a fixed row word, by insertion code (the $\xv$-exponents).

\begin{prop}\label{prop:interpolation-triangularity}
The matrix $\mathcal M_\mu$ is invertible.
\end{prop}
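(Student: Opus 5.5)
We will show that $\mathcal M_\mu$ is block lower (or upper) triangular with respect to the row-word blocks, and that each diagonal block is invertible; invertibility of $\mathcal M_\mu$ then follows since its determinant is the product of the determinants of the diagonal blocks.

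\emph{Block triangularity.} Fix $\tau,\sigma\in\CST_\mu$ and expand $\mathcal G_\sigma=\mathcal N_\sigma(\xv)\yv^{\rowvec(\sigma)}$ as a sum of monomials $\xv^\eta\yv^{\rowvec(\sigma)}$. Every such monomial has $\yv$-exponent exactly $\beta:=\rowvec(\sigma)$, and $|\beta|=N$ since $\sigma$ is column-strict. By Lemma~\ref{lem:row-word-triangularity}, $\mathcal A_\tau(\xv^\eta\yv^\beta)\neq 0$ forces $\beta\ge_{\mathrm{lex}}\rowvec(\tau)$. Hence $\mathcal A_\tau(\mathcal G_\sigma)=0$ whenever $\rowvec(\sigma)<_{\mathrm{lex}}\rowvec(\tau)$. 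With rows and columns ordered by lexicographic order of row words, this is exactly block triangularity, with diagonal blocks indexed by the sets $\CST_\mu(\beta)$.

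\emph{Diagonal blocks.} Now take $\rowvec(\sigma)=\rowvec(\tau)=\beta$. By the equality case of Lemma~\ref{lem:row-word-triangularity} together with the factorization of Lemma~\ref{lem:column-alt-factorization}, applied monomial by monomial and summed by linearity in the $\xv$-part, we get
\[
\mathcal A_\tau(\mathcal G_\sigma)=\Upsilon_\mu\,\mathcal N_\sigma(\mathbf p_\tau).
\]
The point is that $\mathcal A_\tau(\xv^\eta\yv^\beta)=(\xv^\eta)(\mathbf p_\tau)\Upsilon_\mu$ for every $\eta$, with the same constant $\Upsilon_\mu\neq0$. Thus the $\beta$-diagonal block equals $\Upsilon_\mu$ times the matrix $\bigl(\mathcal N_\sigma(\mathbf p_\tau)\bigr)_{\tau,\sigma\in\CST_\mu(\beta)}$. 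With the insertion-code order inside each block, Lemma~\ref{lem:newton-triangularity} says this matrix is triangular with nonzero diagonal, hence invertible.

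\emph{Main obstacle.} The step needing the most care is checking that the orderings are consistent. One must confirm that the direction of triangularity from Lemma~\ref{lem:newton-triangularity} holds uniformly within each block; its proof shows $\mathcal N_\sigma(\mathbf p_\tau)=0$ when $\tau$'s code is lex-smaller at the first difference, which is a consistent direction. One must also confirm that the inter-block direction does not interfere. It cannot, since any block triangular matrix, upper or lower, with invertible diagonal blocks is invertible. Finally, $\mathcal N_\sigma(\mathbf p_\tau)$ depends only on the $\xv$-coordinates of $\mathbf p_\tau$, which are exactly the values $c_\tau(a)$ used in Lemma~\ref{lem:newton-triangularity}, so that lemma applies as stated.
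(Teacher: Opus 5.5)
Your proposal is correct and follows the paper's proof essentially step for step: block triangularity in row-word order comes from Lemma~\ref{lem:row-word-triangularity}, the diagonal blocks equal $\Upsilon_\mu$ times the Newton evaluation matrix via the equality case and Lemma~\ref{lem:column-alt-factorization}, and those blocks are invertible by Lemma~\ref{lem:newton-triangularity}. One small correction: $|\rowvec(\sigma)|=N$ holds for every bijective filling of $\mu$, since it is just the sum of the row heights of the boxes, so column-strictness is not the reason for it.
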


\begin{proof}
Lemma \ref{lem:column-alt-factorization} together with linearity of $\mathcal{A}_\tau$ gives
$$ \mathcal{A}_\tau(\mathcal{G}_\sigma) = \mathcal{A}_\tau(\mathcal{N}_\sigma(\xv)\yv^{\rowvec(\sigma)}) =\mathcal{N}_\sigma(\mathbf{p}_\tau) \mathcal{A}_\tau(\yv^{\rowvec(\sigma)})$$
where $\mathcal{N}_\sigma(\mathbf{p}_\tau)$ is the Newton polynomial evaluated at $\mathbf{p}_\tau$. This factorization is legal since $\mathcal{N}_\sigma(\xv)$ depends only on the $\xv$-variables.
By Lemma~\ref{lem:row-word-triangularity}, $\mathcal{A}_\tau(\yv^{\rowvec(\sigma)})$ can be nonzero only if $\rowvec(\sigma)\ge_{\mathrm{lex}}\rowvec(\tau)$, so that $\mathcal M_\mu$ is block triangular with blocks indexed by row words.
On a diagonal block, Lemma~\ref{lem:row-word-triangularity} gives
\[
    \mathcal A_\tau(\mathcal G_\sigma)
       =\Upsilon_\mu\mathcal N_\sigma(\mathbf{p}_\tau).
\]
The diagonal block is therefore a nonzero scalar multiple of the invertible matrix from Lemma~\ref{lem:newton-triangularity}.  Therefore $\mathcal M_\mu$ is block upper triangular with invertible diagonal blocks, and thus is invertible. The claim is proven.
\end{proof}

\begin{thm}[Haglund basis; Theorem~\ref{thma:intro}]\label{thm:ghbasis}
The monomials
\[
    \Basis_\mu^{\mathrm{top}}
      =\left\{\xv^{\invvec(\sigma)}\yv^{\rowvec(\sigma)}:
                 \sigma\in\CST_\mu\right\}
\]
form a basis of $V_\mu^{\mathrm{top}}$.
\end{thm}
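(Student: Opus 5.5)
The plan is to pass from linear independence in $\mathcal Q_\mu$, supplied by Proposition~\ref{prop:interpolation-triangularity}, to a filtered basis of $\mathcal Q_\mu$. Then take $\xv$-leading terms and use the identification $\operatorname{gr}_\xv\mathcal Q_\mu\cong V_\mu^{\mathrm{top}}$ of Corollary~\ref{cor:top-filtration-layer}.

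\textbf{Step 1: the $\mathcal G_\sigma$ form a basis of $\mathcal Q_\mu$.} Each $\mathcal G_\sigma=\mathcal N_\sigma(\xv)\yv^{\rowvec(\sigma)}$ has $\yv$-degree exactly $N$, since $\rowvec(\sigma)$ has content $0^{\mu_1}1^{\mu_2}\cdots$. Hence it defines a class in $\mathcal Q_\mu$. By Lemma~\ref{lem:column-alt-factorization}, every $\mathcal A_\tau$ descends to a functional on $\mathcal Q_\mu$. Suppose $\sum_\sigma c_\sigma\mathcal G_\sigma=0$ in $\mathcal Q_\mu$. Applying each $\mathcal A_\tau$ gives $\mathcal M_\mu c=0$, so $c=0$ by Proposition~\ref{prop:interpolation-triangularity}. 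Thus the classes are linearly independent. Their number is $|\CST_\mu|=\dim\mathcal Q_\mu$ by Corollary~\ref{cor:top-filtration-layer}, so they form a basis of $\mathcal Q_\mu$.

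\textbf{Step 2: the basis is adapted to the $\xv$-filtration.} Assign $\mathcal G_\sigma$ the degree $d_\sigma=|\invvec(\sigma)|$. Since $\deg_\xv\mathcal N_\sigma=d_\sigma$, the class of $\mathcal G_\sigma$ lies in $F^\xv_{d_\sigma}\mathcal Q_\mu$. So for each $d$ the set $\{\mathcal G_\sigma:d_\sigma\le d\}$ is linearly independent inside $F_d^\xv\mathcal Q_\mu$. By Corollary~\ref{cor:top-filtration-layer}, its cardinality $\#\{\sigma:|\invvec(\sigma)|\le d\}$ equals $\dim F^\xv_d\mathcal Q_\mu$. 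It is therefore a basis of $F_d^\xv\mathcal Q_\mu$, so the family is a filtered basis in the sense of Definition~\ref{def:filtered-basis}.

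\textbf{Step 3: pass to the associated graded.} By the remarks after Definition~\ref{def:filtered-basis}, the classes $[\mathcal G_\sigma]\in F^\xv_{d_\sigma}/F^\xv_{d_\sigma-1}$ form a homogeneous basis of $\operatorname{gr}_\xv\mathcal Q_\mu$. Under the identification with $V_\mu^{\mathrm{top}}$, the degree-$d_\sigma$ class of a polynomial of pure $\yv$-degree $N$ is its top $\xv$-homogeneous component. For $\mathcal G_\sigma$ this component is $\operatorname{in}_\xv(\mathcal N_\sigma)\yv^{\rowvec(\sigma)}=\xv^{\invvec(\sigma)}\yv^{\rowvec(\sigma)}=\mathfrak m_\sigma$, by Definition~\ref{def: newton-poly}. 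Hence $\Basis_\mu^{\mathrm{top}}$ is a basis of $V_\mu^{\mathrm{top}}$.

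\textbf{Main obstacle.} The delicate point is Step 3: one must check that the identification $\operatorname{gr}_\xv\mathcal Q_\mu\cong V_\mu^{\mathrm{top}}$ of Corollary~\ref{cor:top-filtration-layer} sends the class of $\mathcal G_\sigma$ to the image of $\mathfrak m_\sigma$ in $V_\mu$. This amounts to unwinding Theorem~\ref{thm:bigr}. The bigraded piece $(d,N)$ is realized by bihomogeneous leading forms. A polynomial that is $\yv$-homogeneous of degree $N$ and has $\xv$-degree $d$ has leading form equal to its $\xv$-top component. So the issue is only that this leading-form map is the natural one, and I would cite the recorded remark preceding the column antisymmetrization subsection for this.
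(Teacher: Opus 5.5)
Your proposal is correct and follows the paper's proof essentially step for step. You get independence of the classes of $\mathcal G_\sigma$ in $\mathcal Q_\mu$ from the functionals $\mathcal A_\tau$ and the invertibility of $\mathcal M_\mu$, upgrade this to a filtered basis via the dimension count of Corollary~\ref{cor:top-filtration-layer}, and then pass to $\operatorname{gr}_{\xv}\mathcal Q_\mu\cong V_\mu^{\mathrm{top}}$, where the initial forms are the $\mathfrak m_\sigma$; your treatment of the leading-form identification is exactly the remark the paper records for this purpose.
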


\begin{proof}
Suppose
\[
    \sum_{\sigma\in\CST_\mu}a_\sigma\mathcal G_\sigma
       \in F^\yv_{N-1}\mathcal{R}_\mu,
\]
where $a_\sigma\in\C$ for each $\sigma\in\CST_\mu$.
Every functional $\mathcal{A}_\tau$ annihilates the lower filtration
piece by
Lemma~\ref{lem:column-alt-factorization}.  Applying all
$\mathcal A_\tau$ gives
\[
    \mathcal M_\mu(a_\sigma)_\sigma=0.
\]
Since $\mathcal{M}_\mu$ is invertible by Proposition~\ref{prop:interpolation-triangularity}, this implies
$a_\sigma=0$ for every $\sigma$.  Hence the images of the
$\mathcal G_\sigma$ are linearly independent in $\mathcal Q_\mu$.

Finally, by Corollary \ref{cor:top-filtration-layer}
\[
    \dim\mathcal Q_\mu= \binom{n}{\mu'}=|\CST_\mu|,
\]
so that the images of the $\mathcal G_\sigma$ form a basis of
$\mathcal Q_\mu$.

Let $F^\xv_d\mathcal Q_\mu$ be the induced $\xv$-degree filtration.  The
polynomial $\mathcal G_\sigma$ lies in filtration degree
$|\invvec(\sigma)|$.  Equation~\eqref{eq:top-filtered-hilbert} gives
\[
 \dim F^\xv_d\mathcal Q_\mu
   =\#\{\sigma\in\CST_\mu:
                  |\invvec(\sigma)|\le d\}.
\]
The corresponding subset of the already independent
$\mathcal G_\sigma$ therefore forms a basis of $F^\xv_d\mathcal Q_\mu$ for
every $d$.  Hence the $\mathcal G_\sigma$ form a filtered basis (cf. Definition \ref{def:filtered-basis}).

Passing to the $\xv$-associated graded, their initial forms are
\[
    \operatorname{in}_\xv(\mathcal G_\sigma)
       =\xv^{\invvec(\sigma)}\yv^{\rowvec(\sigma)}.
\]
Theorem \ref{thm:bigr} identifies these initial
forms with classes in $V_\mu^{\tp}$, and a filtered basis passes to a
homogeneous basis of the associated graded.  The proof is complete.
\end{proof}


\section{\texorpdfstring{$q$}{q}-Whittaker polynomials and the fermionic formula}
\label{sec:fermionic}

Let $N=n(\mu)$, and let $q$ record $\xv$-degree.
We have the $q$-Whittaker polynomial
\begin{equation}\label{eq:q-whittaker-definition}
    W_\mu(X;q):=P_\mu(X;q,0)
       =[t^N]\Ht_\mu(X;q,t);
\end{equation}
see \cite[Section~3.1, Equation~(3.1)]{bhattacharya2024whittaker} and \cite{bergeron2020whittaker}.
In this section, we define a \emph{flag inversion code} based on a construction of \cite{bhattacharya2024whittaker}, and use it to construct the flag basis of $V_\mu^{\tp}$. The advantage of the flag basis is that for a fixed $\yv$-monomial, say $\yv^\beta$, the $\xv$-exponents can be described by a family of simple bounds determined by the row word $\beta$. Both the flag basis and the Haglund basis
of Theorem~\ref{thm:ghbasis} behave nicely with respect to lexicographic
filtration of the $\yv$-monomials, and we use this fact to show that both bases witness the ``fermionic formula" of the $m_{1^n}$-coefficient of $W_\mu(X;q)$.

\subsection{Row words, partition flags, and inversion codes}

Let $\SYT(\mu)$ denote the set of standard Young tableaux of shape $\mu$ in French notation (so that rows increase left to right, columns increase bottom to top). 

\begin{lem}[{\cite[Lemma~5.1]{bhattacharya2024whittaker}}]
    Let $\sigma \in \CST_\mu$. If we sort the rows of $\sigma$, the resulting tableau $\operatorname{rowsort}(\sigma)$ is a standard Young tableau, i.e. $\operatorname{rowsort}(\sigma)\in\SYT(\mu)$.
\end{lem}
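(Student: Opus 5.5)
We have to show that $\operatorname{rowsort}(\sigma)$ has increasing rows and increasing columns. Rows increase by construction, so the only content is columns: with French notation, after sorting every row the entries must still increase going up each column. The plan is to check this one pair of adjacent rows at a time. Fix a row $j$ of length $\mu_j$ and the row $j+1$ above it, of length $\mu_{j+1}\le\mu_j$. Write the sorted entries of row $j$ as $r_1<\dots<r_{\mu_j}$ and those of row $j+1$ as $s_1<\dots<s_{\mu_{j+1}}$. We must show $r_k<s_k$ for every $k\le\mu_{j+1}$.

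The key input is column strictness of $\sigma$. Each cell of row $j+1$ lies directly above a cell of row $j$, and its entry is larger. So the map sending a box in row $j+1$ to the box directly below it is an injection $\phi$ from the entries of row $j+1$ to the entries of row $j$, with $\phi(s)<s$ for every $s$.

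This is the standard matching argument, and it is the one step that needs care. Fix $k$. The $k$ smallest entries $s_1,\dots,s_k$ of row $j+1$ have $k$ distinct images $\phi(s_1),\dots,\phi(s_k)$ in row $j$. Each image satisfies $\phi(s_i)<s_i\le s_k$. So row $j$ contains at least $k$ entries smaller than $s_k$. Hence its $k$-th smallest entry satisfies $r_k<s_k$.

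Applying this to every pair of adjacent rows shows that the columns of $\operatorname{rowsort}(\sigma)$ increase from bottom to top. The shape is unchanged, because sorting only permutes entries within rows, and the entries are still exactly $[n]$. So $\operatorname{rowsort}(\sigma)\in\SYT(\mu)$. The only thing to watch is the convention: ``column-strict'' here means that the entry above exceeds the entry below, which is precisely what gives $\phi(s)<s$.
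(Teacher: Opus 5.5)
Your proof is correct and complete. The hypothesis $\mu_{j+1}\le\mu_j$ guarantees that every box of row $j+1$ sits directly above a box of row $j$, so your map $\phi$ is a well-defined injection with $\phi(s)<s$. The $k$ distinct entries $\phi(s_1),\dots,\phi(s_k)$ are all smaller than $s_k$, which forces $r_k<s_k$. Checking adjacent rows is enough for the columns of the sorted tableau to increase from bottom to top.

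The paper does not prove this lemma itself; it imports the statement from \cite[Lemma~5.1]{bhattacharya2024whittaker}. There is therefore no internal proof to compare against. Your injection (comparison) argument is the standard self-contained proof of this fact and can stand in for the citation.
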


\begin{defn}
    Let $\sigma \in \CST_\mu$. We define a map $\rowsort: \CST_\mu \to \SYT(\mu)$ that simply sorts the rows. 
\end{defn}

This map has the correct codomain by the previous lemma.

\begin{defn}
    Let $\beta = (\beta_1,\dots,\beta_n) \in \mathbb{Z}_{\geq 0}^n$ be a word with content $\mu$, i.e.
    $$ \operatorname{content}(\beta) = 0^{\mu_1}1^{\mu_2}\dots(\ell-1)^{\mu_\ell}$$
    where $\ell$ is the length of $\mu$. A \emph{prefix} of $\beta$ is a truncation $(\beta_1,\dots,\beta_i)$ for some $i \leq n$. A \emph{lattice word} is a word $\beta$ such that every prefix of $\beta$ contains at least as many $r-1$'s as $r$'s, for every $r \geq 1$. Note that this can only occur when $\mu$ is a partition. We denote the set of lattice words with content $\mu$ by $\Lat_\mu$.
\end{defn}

It is a well-known combinatorial fact that standard Young tableaux of shape $\mu$ are in bijection with lattice words $\Lat_\mu$. 

\begin{defn}
    Let $\beta \in \Lat_\mu$. We denote the set of column-strict fillings with a fixed row word $\beta$ by
    $$\CST_\mu(\beta) := \{\sigma \in \CST_\mu:\rowvec(\sigma) = \beta\}$$
\end{defn}

\begin{defn}
    A \emph{complete flag of partitions} or \emph{complete partition flag} is an increasing sequence of partitions

    $$ \lambda^\bullet = (\varnothing = \lambda^{(0)} \subseteq \lambda^{(1)} \subseteq \dots \subseteq \lambda^{(n)}  = \mu)$$
    where $\lambda^{(i)} \vdash i$. If $\lambda^{(n)} = \mu$, we say that $\lambda^\bullet$ is a \emph{complete flag of} $\mu$.
\end{defn}

These flags can be viewed as a saturated chain in Young's lattice of partitions. We remark that our complete flags are a subclass of \emph{Gelfand--Tsetlin patterns}, which often appear in the $q$-Whittaker context; see \cite[Remark~2.2]{molev2006gelfand}. 

\begin{defn}
    Let $\beta \in \Lat_\mu$. We define the integers
    $$ \lambda^{(a)}(\beta)_r := \#\{i \leq a : \beta_i = r-1\}, \qquad 0 \leq a \leq n$$
    We define the $a$-th \emph{prefix shape} to be the partition
    $$ \lambda^{(a)}(\beta) = (\lambda^{(a)}(\beta)_1,\dots,\lambda^{(a)}(\beta)_a)$$
    and the $a$-th \emph{gap} of $\beta$ to be the difference
    $$ g_a(\beta) := \lambda^{(a)}(\beta)_{\beta_a+1}-\lambda^{(a)}(\beta)_{\beta_a + 2}, \qquad 1 \leq a \leq n$$
\end{defn}

\begin{lem}
    Given $\beta \in \Lat_\mu$, the prefix shapes $\lambda^{(a)}(\beta), 0 \leq a \leq n$ give a complete flag of partitions of $\mu$.
\end{lem}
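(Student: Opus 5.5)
The plan is to verify directly the three properties of a complete flag: each $\lambda^{(a)}(\beta)$ is a partition of $a$, consecutive shapes are nested, and the last shape is $\mu$. All three come from unwinding the definitions of the counts $\lambda^{(a)}(\beta)_r$ and of the lattice condition.

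First, the size. Every index $i\le a$ has some value $\beta_i=r-1$ with $r\ge1$. Hence summing $\lambda^{(a)}(\beta)_r$ over $r$ counts each such $i$ exactly once, which gives $|\lambda^{(a)}(\beta)|=a$. The entries are nonnegative integers. Each index contributes to some $r\le a$, so listing only $a$ parts loses nothing. Alternatively, the lattice condition forces $\beta_i\le i-1$, so any nonzero part has index at most $a$.

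Second, the partition condition $\lambda^{(a)}(\beta)_r\ge\lambda^{(a)}(\beta)_{r+1}$. This is exactly the lattice condition applied to the prefix $(\beta_1,\dots,\beta_a)$: the prefix contains at least as many $(r-1)$'s as $r$'s. So each $\lambda^{(a)}(\beta)$ is a partition of $a$.

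Third, nesting and the endpoints. Passing from $a-1$ to $a$ adds one to the single coordinate $r=\beta_a+1$ and leaves every other coordinate unchanged. Hence $\lambda^{(a-1)}(\beta)\subseteq\lambda^{(a)}(\beta)$, and the inclusion adds exactly one box, in row $\beta_a+1$. This is consistent with the size count. At $a=0$ all counts vanish, so $\lambda^{(0)}(\beta)=\varnothing$. At $a=n$ the counts equal the multiplicities of the content $0^{\mu_1}1^{\mu_2}\cdots$, so $\lambda^{(n)}(\beta)=\mu$.

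No step is a genuine obstacle. The only care needed is bookkeeping with the conventions. One is the off-by-one shift between the values $\beta_i=r-1$ and the row index $r$. The other is the truncation to $a$ parts in the definition of $\lambda^{(a)}(\beta)$, which I will justify as in the first step.
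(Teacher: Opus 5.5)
Your proposal is correct and follows the paper's proof: size $a$ by counting each index once, the partition inequalities directly from the lattice condition on the prefix, a one-box increment in row $\beta_a+1$ at each step, and the endpoints from $a=0$ and the content of $\beta$. Your remark that $\beta_i\le i-1$ handles the truncation to $a$ parts more explicitly than the paper, which simply says trailing zeros are omitted.
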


\begin{proof}
For each $a$, the lattice condition gives
\[
    \lambda^{(a)}(\beta)_r
      =\#\{i\le a:\beta_i=r-1\}
      \ge
      \#\{i\le a:\beta_i=r\}
      =\lambda^{(a)}(\beta)_{r+1}
\]
for every $r\ge1$. Thus $\lambda^{(a)}(\beta)$ is a
partition after omitting trailing zeros. Moreover,
\[
    \sum_{r\ge1}\lambda^{(a)}(\beta)_r=a,
\]
since each of the first $a$ letters contributes to
exactly one part. Hence $\lambda^{(a)}(\beta)\vdash a$.

Passing from the prefix of length $a-1$ to the prefix
of length $a$ increases precisely the part indexed
by $r_a:=\beta_a+1$:
\[
    \lambda^{(a)}(\beta)_r
      =
      \begin{cases}
        \lambda^{(a-1)}(\beta)_r+1,
            &r=r_a,\\
        \lambda^{(a-1)}(\beta)_r,
            &r\ne r_a.
      \end{cases}
\]
Consequently,
$\lambda^{(a-1)}(\beta)\subsetneq\lambda^{(a)}(\beta)$,
and the difference consists of one box in row $r_a$.
Finally, $\lambda^{(0)}(\beta)=\varnothing$, and the
content condition gives $\lambda^{(n)}(\beta)=\mu$.
Therefore
\[
    \varnothing=\lambda^{(0)}(\beta)
      \subsetneq\lambda^{(1)}(\beta)
      \subsetneq\cdots
      \subsetneq\lambda^{(n)}(\beta)=\mu
\]
is a complete flag of partitions of $\mu$.
\end{proof}
As a quick corollary, the gaps $g_a(\beta)$ are therefore positive. Indeed, writing $r_a=\beta_a+1$,
the preceding proof gives
\[
\begin{aligned}
    g_a(\beta)
      &=\lambda^{(a)}(\beta)_{r_a}
          -\lambda^{(a)}(\beta)_{r_a+1}\\
      &=1+\lambda^{(a-1)}(\beta)_{r_a}
          -\lambda^{(a-1)}(\beta)_{r_a+1}
       \ge1,
\end{aligned}
\]
since $\lambda^{(a-1)}(\beta)$ is a partition.

\begin{example}\label{ex:prefix-flags-22}
Let $\mu=(2,2)$, so that
$\Lat_\mu=\{0011,0101\}$. The corresponding complete
flags and gaps are shown below.
\begin{center}
\begingroup
\ytableausetup{boxsize=0.9em,centertableaux}
\setlength{\tabcolsep}{10pt}
\renewcommand{\arraystretch}{1.5}
\begin{tabular}{@{}ccccc@{}}
\toprule
$a$
& $\lambda^{(a)}(0011)$
& $g_a(0011)$
& $\lambda^{(a)}(0101)$
& $g_a(0101)$\\
\midrule
0
& $\varnothing$
& ---
& $\varnothing$
& ---\\
\addlinespace[4pt]
1
& $\ydiagram{1}$
& 1
& $\ydiagram{1}$
& 1\\
\addlinespace[6pt]
2
& $\ydiagram{2}$
& 2
& $\ydiagram{1,1}$
& 1\\
\addlinespace[6pt]
3
& $\ydiagram{1,2}$
& 1
& $\ydiagram{1,2}$
& 1\\
\addlinespace[6pt]
4
& $\ydiagram{2,2}$
& 2
& $\ydiagram{2,2}$
& 2\\
\bottomrule
\end{tabular}
\endgroup
\end{center}

The two flags correspond to the standard Young tableaux
\[
    \begin{ytableau}
        3&4\\
        1&2
    \end{ytableau}
    \qquad\text{and}\qquad
    \begin{ytableau}
        2&4\\
        1&3
    \end{ytableau},
\]
respectively: label the box added at step $a$ by $a$.
\end{example}
\ytableausetup{boxsize=normal,nocentertableaux}

We now describe an insertion procedure, which we call \emph{flag insertion}, which takes a tuple $\zeta$ controlled by the gaps $g_a(\beta)$, and outputs a column-strict filling $\sigma \in \CST_\mu$. This algorithm is the inverse map $\psi_{\mathrm{inv}}^{-1}$ of
Bhattacharya--Ratheesh--Viswanath
\cite[Section~9.2, pp.~24--25]{bhattacharya2024whittaker}, specialized to standard content; we give a modified presentation adapted to our needs here.

\begin{defn}[Flag insertion]\label{def:flag-insertion}
Fix $\beta\in\Lat_\mu$ and a tuple
$\zeta=(\zeta_1,\ldots,\zeta_n)$ satisfying
\[
    0\le\zeta_a<g_a(\beta)
    \qquad(a\in[n]).
\]
Starting with the empty diagram of shape $\mu$, process
the rows from top to bottom and, within each row,
process its labels in decreasing order. Label $a$
is assigned to row $\beta_a+1$, where rows are numbered
from $1$ at the bottom.

A cell in this row is \emph{eligible} for $a$ if it is
empty and either has no cell directly above it or its
upper neighbor contains a label larger than $a$.
Number the eligible cells from right to left, starting
at $0$, and place $a$ in the cell numbered $\zeta_a$. See Example \ref{ex:flag-insertion-33} for a detailed computation.
\end{defn}

\Needspace{8\baselineskip}
A priori, it is not clear that all of the required choices exist; the next lemma proves this. Given $\sigma\in\CST_\mu$, we may reconstruct $\sigma$ using flag insertion; we call the resulting tuple the \emph{flag inversion code} of $\sigma$:
\[
    \zeta(\sigma)
       :=(\zeta_1(\sigma),\ldots,\zeta_n(\sigma)),
\]
where $\zeta_a(\sigma)$ is the number of eligible cells
strictly to the right of the cell containing $a$.
As with $\invvec(\sigma)$, the coordinates of
$\zeta(\sigma)$ are written in label order, not
in insertion order.

\begin{lem}\label{lem:flag-code}
Let $\beta\in\Lat_\mu$. At the step for label $a$,
flag insertion has exactly $g_a(\beta)$ eligible cells,
independent of the preceding choices. We therefore have a bijection
\begin{equation}\label{eq:flag-code-bijection}
    \prod_{a=1}^n\{0,1,\ldots,g_a(\beta)-1\}
       \longleftrightarrow\CST_\mu(\beta).
\end{equation}
Moreover, for every $\sigma\in\CST_\mu(\beta)$,
\begin{equation}\label{eq:flag-code-weight}
    |\zeta(\sigma)|
       =|\invvec(\sigma)|.
\end{equation}
\end{lem}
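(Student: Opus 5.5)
The plan is to prove the counting statement first and then read off the bijection. The weight identity \eqref{eq:flag-code-weight} needs a separate argument, which I expect to be the main obstacle.

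\textbf{Counting eligible cells.} Fix $\beta\in\Lat_\mu$ and consider the step at which label $a$ is processed, in row $r=\beta_a+1$. Because rows are processed from top to bottom, row $r+1$ is already completely filled. Because labels within row $r$ are processed in decreasing order, the occupied cells of row $r$ are exactly those holding the labels $b>a$ with $\beta_b=r-1$. Hence the number of empty cells of row $r$ is $\#\{i\le a:\beta_i=r-1\}=\lambda^{(a)}(\beta)_r$. An empty cell fails to be eligible exactly when its upper neighbour carries a label smaller than $a$. The labels in row $r+1$ that are smaller than $a$ number $\lambda^{(a)}(\beta)_{r+1}$, since $a$ itself does not lie in row $r+1$.

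I will maintain the following invariant by induction on the steps: every cell of row $r$ that is already filled lies under a larger label. This holds because cells are filled only when eligible. It follows that the cell below each label $b<a$ of row $r+1$ is still empty. That cell exists because $\mu_{r+1}\le\mu_r$. So exactly $\lambda^{(a)}(\beta)_{r+1}$ empty cells are blocked, and the number of eligible cells is
\[
    \lambda^{(a)}(\beta)_r-\lambda^{(a)}(\beta)_{r+1}=g_a(\beta)\ge 1,
\]
independently of the earlier choices. In particular the algorithm never gets stuck.

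\textbf{The bijection.} The output of flag insertion is column-strict, since every label is placed under a larger label, and it has row word $\beta$ by construction. Distinct codes $\zeta$ give distinct fillings: at the first label where two codes differ, the common eligible list is the same, so that label lands in different cells. Conversely, take $\sigma\in\CST_\mu(\beta)$ and replay it in the same order. The prescribed cell of each label $a$ is empty at that moment, and its upper neighbour, if present, exceeds $a$ by column-strictness, so the cell is eligible. Its index in the right-to-left list is $\zeta_a(\sigma)<g_a(\beta)$. This gives the inverse map and proves \eqref{eq:flag-code-bijection}.

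\textbf{The weight identity (main obstacle).} Here $\zeta_a(\sigma)$ counts the cells $v$ to the right of $a$ in its row that later receive a label $b<a$ and whose upper neighbour is either absent or larger than $a$. By contrast, $\invvec(\sigma)_a$ counts triples in which $a$ is the right-hand middle entry and the comparison is made with the row below. So the two statistics distribute inversions using different neighbouring rows, and the identity should hold only after summing over all labels.

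I would first try to split both totals into contributions indexed by a pair of columns $i<j$ together with a pair of adjacent rows $(r,r+1)$. For $|\zeta|$, this pairs row $r$ with the row above; for $|\invvec|$, it pairs row $r+1$ with the row below. The goal is then a local statement: for each $i<j$, the sum over all rows of the contributions from the configuration formed by the cells in columns $i,j$ of two adjacent rows (with cells possibly missing at the top) agrees for the two statistics. I would verify this by a case analysis on the relative order of the three or four entries involved, using column-strictness to rule out most orders.

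If the local matching fails in some case, the fallback is an exchange argument. Transposing two adjacent eligible choices changes both $|\zeta|$ and $|\invvec|$ by the same amount. Since every $\sigma\in\CST_\mu(\beta)$ is reachable from a fixed base filling by such moves, this would reduce the identity to checking the base filling, where both statistics can be computed directly.
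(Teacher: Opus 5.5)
Your counting argument and your bijection are the same as the paper's. Your invariant, that every filled cell of row $r$ sits under a larger label, makes explicit the paper's remark that the forbidden cells are still empty. Your description of $\zeta_a(\sigma)$ is also exactly the paper's identification: $\zeta_a(\sigma)$ is the number of reflected inversions with left entry $a$. These are configurations with $a$ left of $b$ in a row, $c$ directly above $b$ (or $c=+\infty$), and $b<a<c$.

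\textbf{The gap.} You never prove the weight identity \eqref{eq:flag-code-weight}. You reduce it to $\operatorname{refinv}(\sigma)=|\invvec(\sigma)|$, but then only offer a plan and a fallback exchange argument. In that fallback the moves are not defined, and neither connectivity nor the change in the two statistics is checked. The paper closes this step by citing \cite[Proposition~8.1]{bhattacharya2024whittaker}, which gives $\operatorname{refinv}=\inv$.

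\textbf{Why your plan as stated falls short.} Your column-pair plan can be completed, but not by a case analysis on three or four entries. The two statistics do not agree row by row, nor on any fixed pair of adjacent rows. Take the $(2,2)$ filling with bottom row $3,1$ and top row $4,2$. Its only inversion triple is $(-\infty,1,3)$, in the bottom row. Its only reflected inversion is $a=4$, $b=2$, $c=+\infty$, in the top row. So the matching trades the $-\infty$ boundary term against the $+\infty$ one, and what you need is a telescoping identity.

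\textbf{How to fix it.} For columns $i<j$, put $h:=h_j\le h_i$. Let $x_0<x_1<\cdots$ and $y_0<\cdots<y_{h-1}$ be their entries from bottom to top, and set $x_{-1}=-\infty$, $y_h=+\infty$. The two statistics restricted to this pair of columns are
\begin{itemize}
\item inversion triples: $\sum_{0\le k<h}\mathbf{1}(x_{k-1}<y_k<x_k)$;
\item reflected inversions: $\sum_{0\le k<h}\mathbf{1}(y_k<x_k<y_{k+1})$.
\end{itemize}
Since both columns increase upward and all entries are distinct,
\[
\mathbf{1}(x_{k-1}<y_k<x_k)=\mathbf{1}(y_k<x_k)-\mathbf{1}(y_k<x_{k-1}),\qquad \mathbf{1}(y_k<x_k<y_{k+1})=\mathbf{1}(y_k<x_k)-\mathbf{1}(y_{k+1}<x_k).
\]
After summing over $0\le k<h$, the subtracted sums $\sum_{k=1}^{h-1}\mathbf{1}(y_k<x_{k-1})$ and $\sum_{k=0}^{h-2}\mathbf{1}(y_{k+1}<x_k)$ coincide on reindexing. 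So each pair of columns contributes equally to $|\invvec(\sigma)|$ and to $\operatorname{refinv}(\sigma)$. Every triple of either kind lies in a unique such pair, so summing over pairs gives \eqref{eq:flag-code-weight}. With this added, your proof is self-contained, whereas the paper relies on the external reference.
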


\begin{proof}
Suppose label $a$ belongs to row $r=\beta_a+1$.
All labels larger than $a$ in this row have already
been placed, leaving $\lambda^{(a)}(\beta)_r$ empty
cells. The row above has been completed, and its
$\lambda^{(a)}(\beta)_{r+1}$ entries at most $a$
forbid the cells immediately below them.

These forbidden cells are still empty: none could
have received an already placed label larger than
$a$ while preserving column strictness.
Every other empty cell is eligible, so we conclude the total number of eligible cells is
\[
    \lambda^{(a)}(\beta)_r
       -\lambda^{(a)}(\beta)_{r+1}
       =g_a(\beta),
\]
where there is no dependence on the preceding choices.
Since the gaps are positive, every allowed tuple produces a column-strict filling with row vector $\beta$. Reconstructing any such filling uniquely recovers its choices, proving the bijection.

For Equation~\eqref{eq:flag-code-weight}, recall that a \emph{reflected
inversion} \cite[Definition~8.1]{bhattacharya2024whittaker}
is a configuration
\[
    \begin{ytableau}
        \none & \none & c\\
        a & \none & b
    \end{ytableau}
    \qquad\text{with }b<a<c,
\]
where $a$ lies strictly to the left of $b$ in the same
row, and $c$ is directly above $b$. When the upper
cell is absent, set $c=+\infty$. Write $\operatorname{refinv}(\sigma)$
for the number of these configurations.

When $a$ is inserted, each skipped eligible cell to its
right eventually receives a smaller label $b$, since
labels in the row are processed decreasingly.
Eligibility says that its upper neighbor $c$ satisfies
$c>a$, or is absent. This cell therefore gives a reflected
inversion with left entry $a$.

Conversely, in any reflected inversion with left entry
$a$, the cell containing $b<a$ is still empty when $a$
is inserted. Since its upper neighbor satisfies $c>a$,
or is absent, it is eligible. Being to the right of $a$,
it is one of the skipped cells. Consequently,
\[
    \zeta_a(\sigma)
       =\#\{\text{reflected inversions with left entry }a\}.
\]
Summing over $a$ and applying
\cite[Proposition~8.1]{bhattacharya2024whittaker} gives
\[
    |\zeta(\sigma)|
       =\operatorname{refinv}(\sigma)
       =\inv(\sigma)
       =|\invvec(\sigma)|.\qedhere
\]
\end{proof}

\Needspace{10\baselineskip}
\begin{example}[Flag insertion for $(3,3)$]
\label{ex:flag-insertion-33}
Let $\mu=(3,3)$ and consider
\[
    \beta=(0,0,1,0,1,1),
    \qquad
    \zeta=(0,1,0,1,0,2).
\]
The corresponding complete flag is
\ytableausetup{boxsize=0.6em,centertableaux}
\[
    \varnothing
    \subseteq \ydiagram{1}
    \subseteq \ydiagram{2}
    \subseteq \ydiagram{1,2}
    \subseteq \ydiagram{1,3}
    \subseteq \ydiagram{2,3}
    \subseteq \ydiagram{3,3}.
\]
\ytableausetup{boxsize=normal,nocentertableaux}
and the gaps are
\[
    (g_1(\beta),\ldots,g_6(\beta))
       =(1,2,1,2,2,3).
\]
Notice $0 \leq \zeta_a < g_a(\beta)$ for all $a$, so this choice is valid. The bottom-row labels are $\{1,2,4\}$, and the top-row labels are $\{3,5,6\}$; flag insertion processes
them in the order
\[
    6,\ 5,\ 3,\ 4,\ 2,\ 1
\]
and produces the filling
\[
    \sigma=
    \begin{ytableau}
        6&3&5\\
        4&2&1\\
        \none[\scriptstyle 0]&
        \none[\scriptstyle 1]&
        \none[\scriptstyle 2]
    \end{ytableau},
\]
where the labels beneath the boxes are column coordinates. Notice $\sigma$ is indeed column-strict. The following table describes the insertion algorithm in detail:
\par
\begingroup
\small
\setlength{\tabcolsep}{6pt}
\renewcommand{\arraystretch}{1.3}
\setlength{\LTpre}{6pt}
\setlength{\LTpost}{6pt}
\setlength{\LTleft}{\fill}
\setlength{\LTright}{\fill}
\begin{longtable}{@{}cccccc@{}}
\toprule
Label $a$
& Row
& \shortstack{Eligible\\columns}
& \shortstack{Chosen\\column}
& $\zeta_a$
& Partial filling\\
\midrule
\endfirsthead
\multicolumn{6}{@{}l}{\itshape Flag insertion for $\mu=(3,3)$ (continued)}\\[3pt]
\toprule
Label $a$
& Row
& \shortstack{Eligible\\columns}
& \shortstack{Chosen\\column}
& $\zeta_a$
& Partial filling\\
\midrule
\endhead
\midrule
\multicolumn{6}{r@{}}{\footnotesize\itshape Continued on the next page}\\
\endfoot
\bottomrule
\endlastfoot
6 & 2 & $\{0,1,2\}$ & 0 & 2 &
$\begin{ytableau}
    *(pink)6
      & *(cyan)\phantom{1}
      & *(cyan)\phantom{1}\\
    \phantom{1} & \phantom{1} & \phantom{1}
\end{ytableau}$\\
\addlinespace[7pt]
5 & 2 & $\{1,2\}$ & 2 & 0 &
$\begin{ytableau}
    6 & *(pink)\phantom{1} & *(pink)5\\
    \phantom{1} & \phantom{1} & \phantom{1}
\end{ytableau}$\\
\addlinespace[7pt]
3 & 2 & $\{1\}$ & 1 & 0 &
$\begin{ytableau}
    6 & *(pink)3 & 5\\
    \phantom{1} & \phantom{1} & \phantom{1}
\end{ytableau}$\\
\addlinespace[4pt]
\midrule
4 & 1 & $\{0,2\}$ & 0 & 1 &
$\begin{ytableau}
    6 & 3 & 5\\
    *(pink)4 & \phantom{1} & *(cyan)\phantom{1}
\end{ytableau}$\\
\addlinespace[7pt]
2 & 1 & $\{1,2\}$ & 1 & 1 &
$\begin{ytableau}
    6 & 3 & 5\\
    4 & *(pink)2 & *(cyan)\phantom{1}
\end{ytableau}$\\
\addlinespace[7pt]
1 & 1 & $\{2\}$ & 2 & 0 &
$\begin{ytableau}
    6 & 3 & 5\\
    4 & 2 & *(pink)1
\end{ytableau}$\\
\addlinespace[4pt]
\end{longtable}
\endgroup

where the middle column is ineligible when inserting $4$ because $3$ occupies the box above it, and $3 < 4$.

For comparison, the original inversion triples of
$\sigma$ are
\[
    (-\infty,1,4),\qquad
    (-\infty,1,2),\qquad
    (-\infty,2,4),\qquad
    (4,5,6).
\]
Though they have the same total weight, in general, $\invvec(\sigma) \neq \zeta(\sigma)$. More precisely,
\[
    \invvec(\sigma)=(2,1,0,0,1,0),
    \qquad
    \zeta(\sigma)=(0,1,0,1,0,2),
\]
so $|\invvec(\sigma)|=|\zeta(\sigma)|=4$.
\end{example}

\begin{rem}
    The advantage of the flag insertion is that for a fixed row word $\beta$, the flag inversion codes are precisely the $\zeta$ that satisfy $0 \leq \zeta_a < g_a(\beta)$ for all $a$. This makes the subsequent enumeration of each of the filtration-by-$\yv$ components much easier. This property is not true for the Haglund basis $\Basis_\mu^{\mathrm{top}}$, as we shall see later.
\end{rem}

\subsection{Flag insertion Newton polynomials and the flag basis}

Reconstruct $\sigma\in\CST_\mu(\beta)$ by flag insertion.
At the step for $a$, list the eligible columns in decreasing order:
\[
    \widehat E_a^\sigma
       =\{\widehat e_{a,0}^\sigma>
           \widehat e_{a,1}^\sigma>\cdots>
           \widehat e_{a,g_a(\beta)-1}^\sigma\},
\]
and label the chosen column as
$c_\sigma(a)=\widehat e_{a,\zeta_a(\sigma)}^\sigma$.
Define
\begin{equation}\label{eq:companion-newton}
\begin{aligned}
    \widehat{\mathcal N}_\sigma(\xv)
      &:=\prod_{a=1}^n
           \prod_{0\le j<\zeta_a(\sigma)}
                (x_a-\widehat e_{a,j}^\sigma),\\
    \widehat{\mathcal G}_\sigma(\xv,\yv)
      &:=\widehat{\mathcal N}_\sigma(\xv)
             \yv^{\rowvec(\sigma)}.
\end{aligned}
\end{equation}
so that the roots in the $a$-factor of $\widehat{\mathcal N}_\sigma(\xv)$ are precisely the skipped eligible column numbers to the right
of the cell chosen for $a$. In particular,
\[
    \operatorname{in}_{\xv}\widehat{\mathcal G}_\sigma
       =\xv^{\zeta(\sigma)}\yv^{\rowvec(\sigma)}.
\]

\begin{thm}[Flag basis; Theorem~\ref{thmb:intro}]
\label{thm:gap-monomial-basis}
The monomials
\begin{equation}\label{eq:gap-monomial-basis}
    \widehat{\mathcal B}_\mu
       :=\left\{\xv^\zeta\yv^\beta:
          \beta\in\RowLat_\mu,\quad
          0\le\zeta_a<g_a(\beta)\text{ for all }a\right\}
\end{equation}
descend to a homogeneous basis of $V_\mu^{\tp}$.
\end{thm}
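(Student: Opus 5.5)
The plan is to run the proof of Theorem~\ref{thm:ghbasis} again, replacing the Haglund insertion data by the flag insertion data. For each $\sigma\in\CST_\mu$ we use $\widehat{\mathcal G}_\sigma=\widehat{\mathcal N}_\sigma(\xv)\yv^{\rowvec(\sigma)}$, and we form the matrix
\[
    \widehat{\mathcal M}_\mu:=\bigl(\mathcal A_\tau(\widehat{\mathcal G}_\sigma)\bigr)_{\tau,\sigma\in\CST_\mu}.
\]
Rows and columns are ordered first lexicographically by row word, and then, within a fixed row word $\beta$, lexicographically by the flag code $\zeta$ read in flag-insertion order. That order processes rows top to bottom and labels in decreasing order within each row.

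By Lemma~\ref{lem:column-alt-factorization}, $\mathcal A_\tau(\widehat{\mathcal G}_\sigma)=\widehat{\mathcal N}_\sigma(\mathbf p_\tau)\,\mathcal A_\tau(\yv^{\rowvec(\sigma)})$. Lemma~\ref{lem:row-word-triangularity} then makes $\widehat{\mathcal M}_\mu$ block triangular, with blocks indexed by row words. On the diagonal block for $\beta$, each entry equals $\Upsilon_\mu\widehat{\mathcal N}_\sigma(\mathbf p_\tau)$.

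The step I expect to need the most care is the flag analogue of Lemma~\ref{lem:newton-triangularity}: the matrix $\bigl(\widehat{\mathcal N}_\sigma(\mathbf p_\tau)\bigr)_{\tau,\sigma\in\CST_\mu(\beta)}$ should be triangular with nonzero diagonal. The argument should be the same as before.
\begin{enumerate}
    \item Let $a$ be the first label, in flag-insertion order, at which $\sigma$ and $\tau$ choose differently. Up to that step the partial fillings agree, so the eligible lists $\widehat E_a^\sigma$ and $\widehat E_a^\tau$ coincide.
    \item By Lemma~\ref{lem:flag-code}, both lists have exactly $g_a(\beta)$ elements.
    \item If $\zeta_a(\tau)<\zeta_a(\sigma)$, then $c_\tau(a)=\widehat e^\sigma_{a,\zeta_a(\tau)}$ is a root of the $a$-factor of $\widehat{\mathcal N}_\sigma$. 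Hence $\widehat{\mathcal N}_\sigma(\mathbf p_\tau)=0$.
    \item On the diagonal, $\widehat{\mathcal N}_\sigma(\mathbf p_\sigma)$ is a product of differences of distinct column numbers, so it is nonzero.
\end{enumerate}
The subtle point is that this argument needs only the insertion order and the fact that each label is placed among the currently eligible cells. Both hold for flag insertion, with the roles of left and right exchanged. It follows that $\widehat{\mathcal M}_\mu$ is invertible.

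The rest follows the proof of Theorem~\ref{thm:ghbasis}. The functionals $\mathcal A_\tau$ annihilate $F^\yv_{N-1}\mathcal R_\mu$, so the classes of the $\widehat{\mathcal G}_\sigma$ are linearly independent in $\mathcal Q_\mu$. Since $\dim\mathcal Q_\mu=|\CST_\mu|$ by Corollary~\ref{cor:top-filtration-layer}, these classes form a basis of $\mathcal Q_\mu$.

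Each $\widehat{\mathcal G}_\sigma$ lies in $F^\xv_{|\zeta(\sigma)|}$. Equation~\eqref{eq:flag-code-weight} gives $|\zeta(\sigma)|=|\invvec(\sigma)|$, so Equation~\eqref{eq:top-filtered-hilbert} yields
\[
    \dim F^\xv_d\mathcal Q_\mu=\#\{\sigma:|\zeta(\sigma)|\le d\}.
\]
The independent elements of degree at most $d$ therefore span $F^\xv_d\mathcal Q_\mu$, and the $\widehat{\mathcal G}_\sigma$ form a filtered basis.

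Passing to $\operatorname{gr}_\xv\mathcal Q_\mu\cong V_\mu^{\tp}$, the initial forms $\xv^{\zeta(\sigma)}\yv^{\rowvec(\sigma)}$ give a homogeneous basis. The bijection \eqref{eq:flag-code-bijection} of Lemma~\ref{lem:flag-code} identifies this set with $\widehat{\mathcal B}_\mu$. Here we also use that the row words of fillings in $\CST_\mu$ are exactly $\Lat_\mu$, which follows from rowsort landing in $\SYT(\mu)$.
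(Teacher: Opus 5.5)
Your proposal is correct and follows essentially the same route as the paper's proof. It uses the same column-alternating functionals $\mathcal A_\tau$, block triangularity in row-word order via Lemma~\ref{lem:row-word-triangularity}, and Newton triangularity within each block (ordering by flag code in flag-insertion order). It then applies the filtered-dimension count using $|\zeta(\sigma)|=|\invvec(\sigma)|$ and identifies the initial forms with $\widehat{\mathcal B}_\mu$ through the bijection of Lemma~\ref{lem:flag-code}; your explicit remark that the row words of fillings in $\CST_\mu$ are exactly the lattice words is a small detail the paper leaves implicit.
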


\begin{proof}
We apply the argument of Theorem~\ref{thm:ghbasis} analogously to
$\widehat{\mathcal G}_\sigma$.
Within a fixed row word, order fillings increasingly by
flag inversion code read in insertion order.
At the first differing step, the earlier filling chooses
one of the roots in the later Newton polynomial, so
$\widehat{\mathcal N}_\sigma(\mathbf p_\tau)=0$ when $\tau<\sigma$.
On the diagonal, evaluation gives
\[
    \widehat{\mathcal N}_\sigma(\mathbf p_\sigma)
       =\prod_a\prod_{j<\zeta_a(\sigma)}
          \bigl(c_\sigma(a)-\widehat e_{a,j}^\sigma\bigr)
       \ne0.
\]
By Lemmas~\ref{lem:column-alt-factorization}
and~\ref{lem:row-word-triangularity}, the pairing with the
$\mathcal A_\tau$ is block upper triangular in row-word order,
with diagonal blocks equal to $\Upsilon_\mu$ times these
invertible Newton matrices. The classes of the
$\widehat{\mathcal G}_\sigma$ are then independent in $\mathcal Q_\mu$.
By Lemma \ref{lem:flag-code}, they have the same
$\xv$-degree distribution as the Haglund interpolation basis.
The filtered-dimension argument of
Theorem~\ref{thm:ghbasis} therefore applies unchanged.
Passing to $\operatorname{gr}_{\xv}\mathcal Q_\mu\cong V_\mu^{\tp}$
gives a basis of initial monomials, and
\eqref{eq:flag-code-bijection} identifies it with
\eqref{eq:gap-monomial-basis}.
\end{proof}

\subsection{Filtration by \texorpdfstring{$\yv$}{y}-monomial and the fermionic formula}

We now give a graded $\C[\xv]$-submodule filtration of $V_\mu^{\mathrm{top}}$ by lexicographical order according to $\yv$-exponent. For $\beta\in\mathbb Z_{\ge0}^n$ with $|\beta|=N$, define
\begin{equation}\label{eq:row-word-filtration}
    \mathscr F_{\le\beta}V_\mu^{\tp}
       :=\sum_{\substack{\gamma\in\mathbb Z_{\ge0}^n,\ |\gamma|=N\\
                         \gamma\le_{\mathrm{lex}}\beta}}
           \C[\xv]\,[\yv^\gamma],
\end{equation}
where brackets denote classes modulo $J_\mu$.
Define $\mathscr F_{<\beta}$ similarly and set
\[
    \operatorname{gr}^{\mathrm{lex}}_\beta V_\mu^{\tp}
       :=\mathscr F_{\le\beta}V_\mu^{\tp}/
          \mathscr F_{<\beta}V_\mu^{\tp}.
\]

\begin{prop}\label{prop:row-word-filtration}
Both the Haglund basis of Theorem~\ref{thm:ghbasis} and the
flag basis of Theorem~\ref{thm:gap-monomial-basis} are compatible with~\eqref{eq:row-word-filtration} in the following sense: their elements with
row word at most $\beta$ form a basis of
$\mathscr F_{\le\beta}V_\mu^{\tp}$.
Consequently,
\begin{equation}\label{eq:row-subquotient-hilbert}
    \Hilb_q\!\left(\operatorname{gr}^{\mathrm{lex}}_\beta
                         V_\mu^{\tp}\right)
       =\begin{cases}
           \displaystyle\prod_{a=1}^n[g_a(\beta)]_q,
                &\beta\in\RowLat_\mu,\\[4pt]
           0,   &\beta\notin\RowLat_\mu,
         \end{cases}
\end{equation}
where $[m]_q:=1+q+\cdots+q^{m-1}$.
\end{prop}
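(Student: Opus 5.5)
The plan is a sandwich argument: trap $\mathscr F_{\le\beta}V_\mu^{\tp}$ between the span of the basis monomials with row word $\le_{\mathrm{lex}}\beta$ and a subspace of the same dimension cut out by the functionals $\mathcal A_\tau$. I will run it in parallel for the Haglund basis $\Basis_\mu^{\mathrm{top}}$ and the flag basis $\widehat{\mathcal B}_\mu$, since both consist of monomials $\xv^{\eta}\yv^{\rowvec(\sigma)}$ indexed by $\sigma\in\CST_\mu$. By the lemma on $\operatorname{rowsort}$ and the bijection $\SYT(\mu)\leftrightarrow\Lat_\mu$, every row word $\rowvec(\sigma)$ is a lattice word.

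\emph{Lower bound.} Each basis monomial $\xv^\eta\yv^\gamma$ with $\gamma\le_{\mathrm{lex}}\beta$ lies in $\C[\xv][\yv^\gamma]\subseteq\mathscr F_{\le\beta}V_\mu^{\tp}$. These monomials are linearly independent, being part of a basis by Theorem~\ref{thm:ghbasis}, respectively Theorem~\ref{thm:gap-monomial-basis}. Hence
\[
\dim\mathscr F_{\le\beta}V_\mu^{\tp}\ \ge\ \#\{\sigma\in\CST_\mu:\rowvec(\sigma)\le_{\mathrm{lex}}\beta\}.
\]

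\emph{Upper bound.} This is the main step, because the functionals $\mathcal A_\tau$ live on $\mathcal Q_\mu$ rather than on the associated graded. Let $U_\beta\subseteq\mathcal Q_\mu$ be the image of the polynomials that are $\yv$-homogeneous of degree $N$ and whose $\yv$-monomials all have exponent $\le_{\mathrm{lex}}\beta$.
\begin{enumerate}
\item By Lemma~\ref{lem:row-word-triangularity}, $\mathcal A_\tau$ annihilates $U_\beta$ whenever $\rowvec(\tau)>_{\mathrm{lex}}\beta$.
\item By Proposition~\ref{prop:interpolation-triangularity}, the matrix $\mathcal M_\mu$ is invertible. Together with $\dim\mathcal Q_\mu=|\CST_\mu|$, this shows that the $\mathcal A_\tau$ form a basis of $\mathcal Q_\mu^*$.
\item Their common kernel on the indices $\rowvec(\tau)>_{\mathrm{lex}}\beta$ therefore has dimension $\#\{\tau:\rowvec(\tau)\le_{\mathrm{lex}}\beta\}$. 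This bounds $\dim U_\beta$.
\end{enumerate}
Now give $U_\beta$ the filtration induced by $F^\xv_\bullet\mathcal Q_\mu$, so that $\operatorname{gr}_\xv U_\beta$ embeds in $\operatorname{gr}_\xv\mathcal Q_\mu\cong V_\mu^{\tp}$ with $\dim\operatorname{gr}_\xv U_\beta=\dim U_\beta$. Take a homogeneous element of $\mathscr F_{\le\beta}V_\mu^{\tp}$ of $\xv$-degree $d$. It is represented by an $\xv$-homogeneous polynomial $f$ of degree $d$, $\yv$-homogeneous of degree $N$, with $\yv$-words $\le_{\mathrm{lex}}\beta$. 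The class of $f$ lies in $U_\beta\cap F^\xv_d\mathcal Q_\mu$, and under the identification recorded after Definition~\ref{def:filtered-basis}, its degree-$d$ class is exactly the given element. Thus $\mathscr F_{\le\beta}V_\mu^{\tp}\subseteq\operatorname{gr}_\xv U_\beta$, which gives the matching upper bound.

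The two bounds force equality, so the basis elements with row word $\le_{\mathrm{lex}}\beta$ form a basis of $\mathscr F_{\le\beta}V_\mu^{\tp}$. The same holds with $<_{\mathrm{lex}}$: apply the argument to the lex-predecessor of $\beta$ among degree-$N$ words, or rerun it verbatim. Hence the classes of the basis monomials with row word exactly $\beta$ form a homogeneous basis of $\operatorname{gr}^{\mathrm{lex}}_\beta V_\mu^{\tp}$.

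\emph{Hilbert series.} For $\beta\notin\Lat_\mu$ no such monomials exist, so the subquotient is $0$. For $\beta\in\Lat_\mu$, read off the count from the flag basis: its monomials with row word $\beta$ are $\xv^\zeta\yv^\beta$ with $0\le\zeta_a<g_a(\beta)$, by the bijection \eqref{eq:flag-code-bijection}. Their $\xv$-degree generating function is $\prod_a[g_a(\beta)]_q$. Comparing with the Haglund basis, which spans the same subquotient, also yields Corollary~\ref{corc:intro}.
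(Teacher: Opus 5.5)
Your argument is correct, but it handles the spanning half differently from the paper.

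\textbf{The paper's route.} It proves spanning directly. It expands the class of each generator $\xv^\alpha\yv^\gamma$ in $\mathcal Q_\mu$ in the basis $\{\mathcal G_\sigma\}$ (respectively $\{\widehat{\mathcal G}_\sigma\}$). The vanishing of $\mathcal A_\tau$ for $\rowvec(\tau)>_{\mathrm{lex}}\gamma$, together with the block-triangular shape of the pairing matrix with invertible diagonal blocks, kills by back substitution every coefficient with row word $>_{\mathrm{lex}}\gamma$. The filtered-basis property then kills every coefficient of $\xv$-degree $>|\alpha|$. Finally it takes the degree-$|\alpha|$ class in $\operatorname{gr}_\xv\mathcal Q_\mu\cong V_\mu^{\tp}$.

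\textbf{Your route.} You never expand anything. You bound $\dim\mathscr F_{\le\beta}V_\mu^{\tp}$ above by $\dim U_\beta$. That bound uses only that the $\mathcal A_\tau$ are linearly independent on $\mathcal Q_\mu$, which is all you need from the invertibility of $\mathcal M_\mu$, and that passing to $\operatorname{gr}_\xv$ preserves dimension. You then match it with the lower bound coming from independence of the basis monomials.

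\textbf{What each buys.} Your upper bound does not use the block structure of the pairing or the filtered-basis property of the $\mathcal G_\sigma$. It is therefore basis-free, and one estimate serves both bases at once. You also make explicit two points the paper leaves implicit:
\begin{itemize}
\item every $\rowvec(\sigma)$ is a lattice word, which is needed to get zero subquotients for $\beta\notin\RowLat_\mu$ in the Haglund case;
\item the passage from $\le_{\mathrm{lex}}\beta$ to $<_{\mathrm{lex}}\beta$ (via the lex-predecessor), which is what yields the basis of $\operatorname{gr}^{\mathrm{lex}}_\beta V_\mu^{\tp}$.
\end{itemize}
The paper's route, in exchange, is constructive: it produces the actual expansion of $[\xv^\alpha\yv^\gamma]$ in each basis, using only basis elements of row word at most $\gamma$ and $\xv$-degree at most $|\alpha|$.
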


\begin{proof}
Let $f=\xv^\alpha\yv^\gamma$ with $|\gamma|=N$, and expand
its class in $\mathcal Q_\mu$ in either basis $\{\mathcal{G}_\sigma\}, \{\widehat{\mathcal{G}}_\sigma\}$.
Lemma \ref{lem:row-word-triangularity} gives
\[
    \mathcal A_\tau(f)=0
       \qquad\text{if }\rowvec(\tau)>_{\mathrm{lex}}\gamma.
\]
Since the corresponding pairing matrix is block upper triangular
with invertible diagonal blocks, back substitution shows that
only basis elements with row words at most $\gamma$ occur.
Moreover, both bases $\{\mathcal{G}_\sigma\}, \{\widehat{\mathcal{G}}_\sigma\}$ are filtered, so only elements
of $\xv$-degree at most $|\alpha|$ occur. Passing to the
$\xv$-class in degree $|\alpha|$ expresses $[f]\in V_\mu^{\tp}$
using monomial basis elements with row words at most $\gamma$.
This proves spanning; independence follows from
the respective basis theorems.

We conclude the flag basis monomials with row word exactly $\beta$
give a basis of the subquotient. If $\beta$ is lattice,
their degree generating function is given by
\[
    \sum_{0\le\zeta_a<g_a(\beta)}q^{|\zeta|}
       =\prod_{a=1}^n[g_a(\beta)]_q.
\]
Otherwise there are no basis elements with that row word,
so the subquotient is zero.
\end{proof}

\begin{cor}
    We have
    \begin{equation}\label{eq:global-fermi}
    \Hilb_q(V_\mu^{\tp})
      =[m_{1^n}]W_\mu(X;q)
      =\sum_{\beta\in\RowLat_\mu}
          \prod_{a=1}^n[g_a(\beta)]_q,
\end{equation}
where $[m_{1^n}]$ extracts the coefficient in the monomial
symmetric-function basis. The flag basis $\widehat{\Basis}_\mu$ therefore witnesses the fermionic formula of the $m_{1^n}$ coefficient of $W_\mu(X;q)$.
\end{cor}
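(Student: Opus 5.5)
The plan is to obtain the corollary by summing the graded pieces of the row-word filtration from Proposition~\ref{prop:row-word-filtration}, and then to compare with the Hilbert series identification from Section~\ref{subsec:q-whittaker-background}.

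\textbf{Second equality.} The filtration $\mathscr F_{\le\beta}V_\mu^{\tp}$ is indexed by the finitely many $\beta\in\Z_{\ge0}^n$ with $|\beta|=N$. These are totally ordered by $\le_{\mathrm{lex}}$, so the filtration is a finite increasing chain of graded subspaces.
\begin{itemize}
    \item At the top of the chain we get all of $V_\mu^{\tp}$. Indeed, every class in $V_\mu^{\tp}$ is represented by a combination of monomials $\xv^\alpha\yv^\gamma$ with $|\gamma|=N$, and each such monomial lies in $\mathscr F_{\le\gamma}$.
    \item Since all pieces are graded by $\xv$-degree, the Hilbert series is additive along the chain. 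Hence $\Hilb_q(V_\mu^{\tp})=\sum_\beta\Hilb_q(\operatorname{gr}^{\mathrm{lex}}_\beta V_\mu^{\tp})$.
    \item Substituting Equation~\eqref{eq:row-subquotient-hilbert} kills the non-lattice terms and gives $\sum_{\beta\in\RowLat_\mu}\prod_a[g_a(\beta)]_q$.
\end{itemize}
Alternatively, one can bypass the filtration entirely. Count the flag basis $\widehat{\Basis}_\mu$ of Theorem~\ref{thm:gap-monomial-basis} by $\xv$-degree directly: it is a disjoint union over $\beta\in\RowLat_\mu$ of boxes $\prod_a\{0,\dots,g_a(\beta)-1\}$, and each box contributes $\prod_a[g_a(\beta)]_q$.

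\textbf{First equality.} By Corollary~\ref{cor:top-filtration-layer}, $\Hilb_q(V_\mu^{\tp})=\sum_{\sigma\in\CST_\mu}q^{|\invvec(\sigma)|}$. This equals $[m_{1^n}]\,[t^{N}]\Ht_\mu(X;q,t)$, as computed in Section~\ref{sec:background}. Equation~\eqref{eq:q-whittaker-top-coefficient} identifies this quantity with $[m_{1^n}]W_\mu(X;q)$.

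\textbf{Main obstacle.} The only point needing care is the top-of-chain claim in the second equality: that the lexicographic filtration exhausts $V_\mu^{\tp}$, so no graded piece is missed. This follows because $V_\mu^{\tp}$ is spanned by classes of bihomogeneous monomials of $\yv$-degree $N$. The "witnessing" statement about $\widehat{\Basis}_\mu$ is then immediate, since its elements with row word $\beta$ realize exactly the summand $\prod_a[g_a(\beta)]_q$.
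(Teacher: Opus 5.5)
Your proposal is correct and matches the paper. The paper states this corollary without a separate proof because it follows at once from two ingredients: summing Equation~\eqref{eq:row-subquotient-hilbert} over $\beta$ (equivalently, counting the flag basis $\widehat{\Basis}_\mu$ of Theorem~\ref{thm:gap-monomial-basis} by $\xv$-degree), and the identification $\Hilb_q(V_\mu^{\tp})=[m_{1^n}]W_\mu(X;q)$ from Section~\ref{subsec:q-whittaker-background}. Your argument that the lexicographic filtration exhausts $V_\mu^{\tp}$ is fine, though the direct count of $\widehat{\Basis}_\mu$ already suffices on its own.
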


\begin{rem}
    In terms of the Gelfand--Tsetlin pattern terminology, consider a general interlacing chain $\varnothing=\lambda^{(0)}\subseteq\cdots\subseteq\lambda^{(m)}=\mu$, with $\lambda_r^{(a)}\ge\lambda_r^{(a-1)}\ge\lambda_{r+1}^{(a)}$, and set
    \[
    A_{a,r}:=\lambda_r^{(a)}-\lambda_r^{(a-1)},
    \qquad
    B_{a,r}:=\lambda_r^{(a-1)}-\lambda_{r+1}^{(a)}.
\]
The interlacing inequalities make $A_{a,r}$ and $B_{a,r}$ nonnegative. The fermionic weight of the chain is
\[
    \prod_{a=1}^{m}\prod_{r\ge1}
       \begin{bmatrix}A_{a,r}+B_{a,r}\\A_{a,r}\end{bmatrix}_q,
\]
where each $q$-binomial counts partitions fitting in an
$A_{a,r}\times B_{a,r}$ rectangle, weighted by their size;
see~\cite[Equations~(3.5) and~(4.1)]{bhattacharya2024whittaker}.
Since the flags associated with our $\beta$'s are saturated chains, $m=n$, and the only nonzero
$A_{a,r}$ at step $a$ is $A_{a,r_a}=1$, where $r_a=\beta_a+1$.
Also, $B_{a,r_a}=g_a(\beta)-1$, so the weight is precisely
$\prod_a[g_a(\beta)]_q$.
\end{rem}

\subsection{Enumeration of Haglund basis row-word fibers}

\begin{cor}[Corollary~\ref{corc:intro}]\label{cor:haglund-row-fiber}
For every $\beta\in\RowLat_\mu$, the classes of
\[
    \left\{\mathfrak m_\sigma:
               \sigma\in\CST_\mu(\beta)\right\}
\]
form a homogeneous basis of
$\operatorname{gr}^{\mathrm{lex}}_\beta V_\mu^{\tp}$.
In particular,
\begin{equation}\label{eq:row-fiber-fermi}
    \sum_{\sigma\in\CST_\mu(\beta)}
       q^{|\invvec(\sigma)|}
       =\prod_{a=1}^n[g_a(\beta)]_q.
\end{equation}
Equivalently, the fiber over each row-sorted standard tableau
has the same product enumerator.
\end{cor}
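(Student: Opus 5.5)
The plan is to read this off directly from Proposition~\ref{prop:row-word-filtration}, applied to the Haglund basis rather than the flag basis. That proposition shows that, for every $\beta$, the Haglund monomials $\mathfrak m_\sigma$ with $\rowvec(\sigma)\le_{\mathrm{lex}}\beta$ form a basis of $\mathscr F_{\le\beta}V_\mu^{\tp}$. Applying the same statement to the lexicographic predecessor of $\beta$ (among the finitely many $\gamma$ with $|\gamma|=N$) shows that the monomials with $\rowvec(\sigma)<_{\mathrm{lex}}\beta$ form a basis of $\mathscr F_{<\beta}V_\mu^{\tp}$.

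Given a basis of a space together with a sub-collection that is a basis of a subspace, the complementary elements descend to a basis of the quotient. Here the complementary elements are exactly those with $\rowvec(\sigma)=\beta$, so their classes $\{\mathfrak m_\sigma:\sigma\in\CST_\mu(\beta)\}$ form a basis of $\operatorname{gr}^{\mathrm{lex}}_\beta V_\mu^{\tp}$. These classes are homogeneous in $\xv$-degree, since each $\mathfrak m_\sigma$ is a monomial and the filtration consists of graded $\C[\xv]$-submodules. Hence the subquotient is graded, and its Hilbert series is $\sum_{\sigma\in\CST_\mu(\beta)}q^{|\invvec(\sigma)|}$.

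Comparing with Equation~\eqref{eq:row-subquotient-hilbert}, which the same proposition computes via the flag basis as $\prod_a[g_a(\beta)]_q$ for lattice $\beta$, yields \eqref{eq:row-fiber-fermi}. One must check that the lattice words here match the row words of $\CST_\mu$. The row word of any $\sigma\in\CST_\mu$ is lattice, because $\rowsort(\sigma)\in\SYT(\mu)$ has the same row word; conversely, every $\beta\in\RowLat_\mu$ has $\CST_\mu(\beta)$ nonempty by Lemma~\ref{lem:flag-code}. The "equivalently" clause is a restatement: $\CST_\mu(\beta)$ is exactly the fiber of $\rowsort$ over the standard tableau corresponding to $\beta$, since row-sorting preserves which row each label lies in, and standard tableaux are in bijection with lattice words.

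The real content is in Proposition~\ref{prop:row-word-filtration}, specifically its spanning argument for the Haglund basis. One first takes the expansion in $\mathcal Q_\mu$ and obtains triangularity there, using that the $\mathcal A_\tau$ pairing is block upper triangular in row-word order. One then passes to $\xv$-initial forms while keeping the row-word support bounded by $\gamma$. Granting that proposition as stated, the corollary itself is essentially bookkeeping, and the only care needed is the "predecessor" step identifying $\mathscr F_{<\beta}$ with a single $\mathscr F_{\le\beta'}$.
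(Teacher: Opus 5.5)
Your proposal is correct and takes the same route as the paper: both read the corollary off Proposition~\ref{prop:row-word-filtration} applied to the Haglund basis. There, the elements with row word exactly $\beta$ give a basis of $\operatorname{gr}^{\mathrm{lex}}_\beta V_\mu^{\tp}$, and you then compare Hilbert series with the flag-basis computation \eqref{eq:row-subquotient-hilbert}. Your predecessor step identifying $\mathscr F_{<\beta}$, the lattice check, and the $\rowsort$-fiber remark spell out bookkeeping that the paper's one-line proof leaves implicit.
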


\begin{proof}
Proposition~\ref{prop:row-word-filtration} shows that the
Haglund basis is adapted to the same filtration as the
flag basis. Its elements with row word exactly $\beta$
therefore form a basis of the same subquotient, whose Hilbert
series is~\eqref{eq:row-subquotient-hilbert}.
\end{proof}

Equation~\eqref{eq:row-fiber-fermi} can be recognized as standard content specialization of \cite[Corollary~6.1(1), Equation~(6.1)]{bhattacharya2024whittaker}. Here it also describes the Hilbert series of the row-word filtration subquotients.

\begin{example}[Exponent bounds are not independent for the Haglund basis]
\label{ex:row-filtration-32}
Let $\mu=(3,2)$ and $\beta=(0,0,1,0,1)$.
The bottom-row labels are $\{1,2,4\}$ and the top-row
labels are $\{3,5\}$, so the common row-sorted tableau is
\[
    \begin{ytableau}
        3&5\\
        1&2&4
    \end{ytableau}.
\]
The gap sequence is
\[
    (g_1(\beta),\ldots,g_5(\beta))=(1,2,1,2,2).
\]
Consequently, the flag inversion codes are precisely
the independent choices
\[
    \zeta_1=\zeta_3=0,
    \qquad
    \zeta_2,\zeta_4,\zeta_5\in\{0,1\}.
\]

The two bases give the following representatives of
$\operatorname{gr}^{\mathrm{lex}}_\beta V_{(3,2)}^\mathrm{top}$.
The table lists only their $\xv$-factors before multiplying by $\yv^\beta=y_3y_5$:
\begin{center}
\begingroup
\setlength{\tabcolsep}{10pt}
\renewcommand{\arraystretch}{1.4}
\begin{tabular}{@{}cll@{}}
\toprule
$\xv$-degree
& Haglund basis
& Flag basis\\
\midrule
0 & $1$
  & $1$\\
1 & $x_1,\ x_2,\ x_3$
  & $x_2,\ x_4,\ x_5$\\
2 & $x_1^2,\ x_1x_2,\ x_1x_3$
  & $x_2x_4,\ x_2x_5,\ x_4x_5$\\
3 & $x_1^2x_2$
  & $x_2x_4x_5$\\
\bottomrule
\end{tabular}
\endgroup
\end{center}

The $\xv$-factors for the Haglund basis form the
disjoint union
\[
    \{x_1^a x_2^b:0\le a\le2,\ 0\le b\le1\}
    \;\sqcup\;
    \{x_1^a x_3:0\le a\le1\}.
\]
Their exponent bounds are therefore dependent:
when $x_3$ occurs, $x_2$ cannot occur and the exponent
of $x_1$ is at most $1$.
Therefore the exponents cannot be described by independent bounds.

Nevertheless, their degree generating function is
\[
    [3]_q[2]_q+q[2]_q
       =[2]_q^3
       =1+3q+3q^2+q^3.
\]
The flag basis gives the same product directly
from its three independent binary choices. We conclude
\[
    \Hilb_q\!\left(
        \operatorname{gr}^{\mathrm{lex}}_{00101}
            V_{(3,2)}^\mathrm{top}
    \right)
       =[2]_q^3.
\]
The product formula describes the graded enumeration
of both bases, but only the flag basis
realizes its factors as independent bounds on the
individual $\xv$-exponents.
\end{example}

\begin{rem}
The two fixed-row-word families are bases of the same
subquotient, but need not span the same subspace of
$V_\mu^{\tp}$ before quotienting by smaller row words.
\end{rem}

\section{Future directions}

Theorem \ref{thma:intro} was originally conjectured in the author's thesis in terms of a certain statistic called ``skip" on a certain set of generalized affine permutations that were in bijection with Haglund's fillings \cite[Conjecture~5.4.5, p.~83]{chou2023thesis}. (There the module is written as $V_{\lambda'}$, so its indexing partition satisfies $\lambda'=\mu$; its $\xv$- and $\yv$-exponent tuples are denoted $\operatorname{skipt}$ and $\operatorname{majt}$, respectively.) The construction uses the fibers of the ``shuffle map" defined by Carlsson and the author in \cite{carlsson2024descent}. The domain of this map, namely the disjoint union of its fibers over all values, has size $n!$ \cite[Section~3.1, p.~29]{chou2023thesis}, and suggests a path forward for answering Problem \ref{problem1}. However, the precise connection between orbit harmonics and the shuffle construction is unclear at the time of this writing, and remains speculative.

\begin{problem}
    Investigate the connection between orbit harmonics of $\mathcal{O}_\mu$ and the fibers of the shuffle map described in \cite{carlsson2024descent}. Use this connection to obtain a basis for the lower $t$-components.
\end{problem}

Another route is to consider using other formulas for $\widetilde{H}_\mu(X;q,t)$ as an oracle for discovering a combinatorial basis. Preliminary calculations in lower $t$-degree have been unwieldy and have not yet produced a compatible extension based on the Haglund--Haiman--Loehr formula for $\widetilde{H}_\mu(X;q,t)$. Unfortunately, other formulae for the modified Macdonald polynomials, such as those of Kaliszewski--Morse \cite{kaliszewski2019colorful} and Ayyer--Mandelshtam--Martin \cite{ayyer2023modified}, have yielded little progress as well; the filtrations of $\mathcal{R}_\mu$ tested so far have not produced a corresponding basis in lower $t$-degree. This suggests the following natural problem:

\begin{problem}
    Find a combinatorial formula for $\widetilde{H}_\mu(X;q,t)$ that is well-adapted to the geometry of $\mathcal{O}_\mu$.
\end{problem}

\Needspace{6\baselineskip}
Another possible way forward is to seek a simple extension of the methods of this paper:  

\begin{problem}
    Construct partial-column analogues of $\mathcal{A}_\tau$ on $\mathcal{O}_\mu$ and apply an analogous Newton-interpolation argument for the lower $t$-layers.
\end{problem}

Problem \ref{problem1} remains unsolved for arbitrary $\mu$, yet recent developments suggest exciting routes forward.

\bibliographystyle{amsalpha}
\bibliography{refs}

\end{document}